\numberwithin{equation}{section}
\newtheorem{theorem}{Theorem}[section]
\newtheorem{lemma}[theorem]{Lemma}
\newtheorem{proposition}[theorem]{Proposition}
\newtheorem{corollary}[theorem]{Corollary}
\theoremstyle{definition}
\newtheorem{definition}[theorem]{Definition}
\newtheorem{example}[theorem]{Example}
\newtheorem{problem}[theorem]{Problem}
\newtheorem{remark}[theorem]{Remark}
\newtheorem{notation}[theorem]{Notation}
\newcommand\Hom{\operatorname{Hom}}
\newcommand\Ext{\operatorname{Ext}}
\newcommand{\au}{\underline a}
\author[Khadam]{M. Azeem Khadam}
\address{Abdus Salam School of Mathematical Sciences, GCU, Lahore Pakistan}
\email{azeemkhadam@gmail.com}
\author[Schenzel]{Peter Schenzel}
\address{Martin-Luther-Universit\"at Halle-Wittenberg,
Institut f\"ur Informatik, D --- 06 099 Halle (Saale), Germany}
\email{peter.schenzel@informatik.uni-halle.de}
\thanks{The first named author is grateful to DAAD and HEC, Pakistan for the support of this research under 
	grant number 91524811 and 112-21480-2PS1-015 (50021731) respectively.}
\title[Multiplicities]{About multiplicities and applications to Bezout numbers}
\begin{document}

	\begin{abstract} 
		Let $(A,\mathfrak{m},\Bbbk)$ denote a local Noetherian ring and $\mathfrak{q}$ an ideal 
		such that $\ell_A(M/\mathfrak{q}M) < \infty$ for a finitely generated $A$-module $M$. 
		Let $\au = a_1,\ldots,a_d$ denote a system of parameters of $M$ such that $a_i \in \mathfrak{q}^{c_i} 
		\setminus \mathfrak{q}^{c_i+1}$ for $i=1,\ldots,d$. It follows that $ \chi := e_0(\au;M) - c \cdot e_0(\mathfrak{q};M) \geq 0$, where $c = c_1\cdot \ldots \cdot c_d$. The main results of the report are a discussion when $\chi = 0$ resp. 
		to describe the value of $\chi$ in some particular cases. Applications concern results on the 
		multiplicity $e_0(\au;M)$ and applications to Bezout numbers.  
	\end{abstract}
	
	\subjclass[2010]
	{Primary: 13H15; Secondary: 13D40}
	\keywords{Koszul complex, multiplicity, Bezout's theorem}

\maketitle
\begin{center}
	{\it {Dedicated to Winfried Bruns on the occasion of his 70th birthday.}}	
\end{center}

\section{Introduction} 
Let $(A,\mathfrak{m},\Bbbk)$ denote a local ring. Let $\mathfrak{q} \subset A$ be an $\mathfrak{m}$-primary ideal 
and $\au = a_1,\ldots,a_d$ be a system of parameters in $A$ such that $a_i \in \mathfrak{q}^{c_i}, 
i = 1,\ldots,d,$ with $c_i > 0$. The main interest in the present report is a comparison 
of the multiplicities $e_0(\au;A)$ and $e_0(\mathfrak{q};A)$. 

Let $M$ be a finitely generated 
$A$-module. Note that for an ideal $\mathfrak{q} \subset A$ such that the length $\ell_A(M/\mathfrak{q}M)$ is finite, the multiplicity $e_0(\mathfrak{q};M)$ is defined as the leading term of the Hilbert-Samuel polynomial 
\[
\ell_A(M/\mathfrak{q}^{n+1}M) = \sum_{i=0}^{d} e_i(\mathfrak{q};M) \binom{n+d-i}{d-i} \, \text{ for } n \gg 0, \text{ with } d = \dim_A M
\] 
(see for instance \cite{ZS}, \cite{Ma} for all the details or more general \cite{RV} for 
generalizations to filtered modules). With the previous assumption, clearly $e_0(\au;A) \geq 
e_0(\mathfrak{q};A)$. We will discuss this relation in more detail. First let us recall some known results:
\begin{itemize}
	\item[(a)] For $\mathfrak{q} = \au A$ we get $e_0(\au^{\underline{n}};A) = n \cdot e_0(\au;A)$, where 
	$\au^{\underline{n}} = a_1^{n_1}, \ldots, a_d^{n_d}$ and $n = n_1 \cdot \ldots \cdot n_d$ for some 
	$(n_1, \ldots,n_d) \in \mathbb{N}^d$.
	\item[(b)] If $\au$ is a minimal reduction of $\mathfrak{q}$, then $e_0(\au;A) = e_0(\mathfrak{q};A)$. 
	The converse is true (see \cite{Re1}) provided $A$ is formally equidimensional. 
	\item[(c)]  It follows that $e_0(\au;A) \geq c \cdot e_0(\mathfrak{q};A)$, where $c = c_1\cdot \ldots \cdot c_d$ 
	with the above notation. 
\end{itemize}

For the proof of (a) we refer to \cite{AB}. The first part of (b) is well-known, while the 
converse is an outstanding result of Rees (see \cite{Re1}). The claim in (c) is easy to prove (see 
\cite{BS} for details). The main goal of the present report is a discussion of the 
difference $\chi := e_0(\au;A) - c \cdot e_0(\mathfrak{q};A) \geq 0$ of (c) in various 
situations, its vanishing  resp. a simplified proof of some known results. 

The importance of the understanding of $\chi$ has to do with Bezout's Theorem in the plane. 
Let $C = V(F), D = V(G) \subset \mathbb{P}^2_{\Bbbk}, \Bbbk$ an algebraically closed field, be two curves in the projective plane without a common component. Then 
\[
\sum_{P \in C\cap D} \mu(P;C,D) = \deg C \cdot \deg D,
\]
where $\mu(P;C,D)$ denotes the local intersection multiplicity of $P$ in $C \cap D$. In the case 
of $P$ is the origin, it follows that $\mu(P;C,D) = e_0(f,g;A)$, where $A = \Bbbk[x,y]_{(x,y)}$ and $f,g$ denote 
the equations in $A$. Since $C,D$ have no component in common, $\{f,g\}$ forms as system of parameters in $A$. Then 
\[
e_0(f,g;A) \geq c\cdot d \cdot e_0(\mathfrak{m};A) = c\cdot d,
\]
since $e_0(\mathfrak{m};A) = 1$. Here $c, d$ denote the initial degree of $f,g$ respectively. This 
estimate is well-known (see for instance \cite{BK} or \cite{Fi}) proved by resultants or Puiseux expansions. 
Moreover equality holds if and only if $C,D$ intersect transversally in the origin. In other words, $f^{\star},g^{\star}$, 
the initial forms of $f,g$ in the form ring $G_A(\mathfrak{m}) \cong \Bbbk[X,Y]$ 
are a homogeneous system of parameters. 

Here we shall provide another argument with extensions to arbitrary local rings.  Let $A$ denote a local 
ring with $\au = a_1,\ldots,a_d \in A$ a system of parameters. Put $c = c_1\cdot \ldots \cdot c_d$ 
for $a_i \in \mathfrak{m}^{c_i} \setminus \mathfrak{m}^{c_i +1}$.  In 
his paper (see \cite{Pr}) the author claimed that $e_0(\au;A) = c \cdot  e_0(\mathfrak{m};A)$ 
if and only if the sequence of initial elements  $a_1^{\star}, \ldots ,a_d^{\star} \in G_A(\mathfrak{m})$ 
forms a regular sequence. This is not true as the following example shows. 

\begin{example} \label{exam-1}
	Let $\Bbbk$ denote a field and $A = \Bbbk[|t^4,t^5,t^{11}|] 
	\subset \Bbbk[|t|],$ where $t$ is an indeterminate over $\Bbbk.$ Then $A$ is a one-dimensional 
	domain and therefore a Cohen-Macaulay ring with $A \simeq \Bbbk[|X,Y,Z|]/(X^4-YZ,Y^3-XZ,Z^2-X^3Y^2).$ 
	Clearly, the residue class $a = x$ of $X$ is a parameter with $a \in \mathfrak{m} \setminus 
	\mathfrak{m}^2,$ so that $c = 1.$  
	
	Furthermore, by easy calculations it follows that $e_0(a,A) = \ell_A(A/aA) = 4$ and 
	$e_0(\mathfrak{m},A) = 4.$ So, the equation $e_0(a,A) = c \cdot e_0(\mathfrak{m},A)$ holds, 
	while $G_A(\mathfrak{m}) = \Bbbk[X,Y,Z]/(XZ,YZ,Y^4,Z^2)$ is not a Cohen-Macaulay ring 
	(see \cite[Section 3]{BS} for the details). 
\end{example} 

In Section 2 we start with some preliminaries and Koszul complexes. In Section 3 we derive some 
new complexes from certain Koszul complexes important for the study of multiplicities. Euler 
characteristics are the feature of Section 4. As an application we derive a short argument 
for computing certain multiplicities as Euler characteristics of Koszul complexes (originally done 
by Auslander and Buchsbaum (see \cite{AB}) and Serre (see \cite{jpS}) by spectral sequence arguments). 
In Section 5 we study the equality $e_0(\au;M) = c_1 \cdot \ldots \cdot c_d \cdot e_0(\mathfrak{q};M)$. Under 
some additional regularity condition on the sequence of initial forms $a_1^{\star}, \ldots, a_{d-1}^{\star}$ 
in $G_A(\mathfrak{q})$ we estimate the difference $\ell_A(M/\au M) - c_1 \cdot \ldots \cdot c_d \cdot e_0(\mathfrak{q};M)$. 
As an application we get a bound of the local Bezout intersection numbers of two curves in the 
projective plane without common component.

\section{Preliminaries}
First let us fix the notations we will use in the following. For the basics on $\mathbb{N}$-graded 
structures we refer e.g. to \cite{GW}.

\begin{notation} \label{not-1}
	(A) We denote by $A$ a commutative Noetherian ring with $0 \not= 1$. 
	For an ideal we write $\mathfrak{q} \subset A$. An $A$-module is denoted by $M$. 
	Mostly we consider  $M$ as finitely generated. \\
	(B) We consider the Rees and form rings of $A$ with respect 
	to $\mathfrak{q}$ by 
	\[
	R_A(\mathfrak{q}) = \oplus_{n \geq 0} \mathfrak{q}^n \,T^n \subseteq A[T] \,
	\text{ and }\, G_A(\mathfrak{q}) = \oplus_{n \geq 0} \mathfrak{q}^n/\mathfrak{q}^{n+1}.
	\]
	Here $T$ denotes an indeterminate over $A$. Both rings are naturally $\mathbb{N}$-graded. For an $A$-module $M$ we define 
	the Rees and form modules in the corresponding way by 
	\[
	R_M(\mathfrak{q}) = \oplus_{n \geq 0} \mathfrak{q}^n M \,T^n \subseteq M[T] \,
	\text{ and } \, G_M(\mathfrak{q}) = \oplus_{n \geq 0} \mathfrak{q}^nM/\mathfrak{q}^{n+1}M.
	\]
	Note that $R_M(\mathfrak{q})$ is a graded $R_A(\mathfrak{q})$-module and 
	$G_M(\mathfrak{q})$ is a graded $G_A(\mathfrak{q})$-module. Note that $R_A(\mathfrak{q})$ 
	and $G_A(\mathfrak{q})$ are both Noetherian rings. In case $M$ is a finitely generated 
	$A$-module then $R_M(\mathfrak{q})$ resp. $G_M(\mathfrak{q})$ is finitely generated 
	over $R_A(\mathfrak{q})$ resp. $G_A(\mathfrak{q})$. \\
	(C) There are the following two short exact sequences of graded modules
	\begin{gather*}
	0 \to R_M(\mathfrak{q})_{+}[1] \to R_M(\mathfrak{q}) \to G_M(\mathfrak{q}) \to 0 \text{ and }\\
	0 \to R_M(\mathfrak{q})_{+} \to R_M(\mathfrak{q}) \to M \to 0,
	\end{gather*}
	where $R_M(\mathfrak{q})_{+} = \oplus_{n > 0} \mathfrak{q}^n M \,T^n$. \\
	(D) Let $m \in M$ and $m \in \mathfrak{q}^c M \setminus \mathfrak{q}^{c+1} M$. Then we define  
	$m^{\star} := m + \mathfrak{q}^{c+1} M \in [G_M(\mathfrak{q})]_c$. If $m \in \cap_{n \geq 1} \mathfrak{q}^n M$, 
	then we write $m^{\star} = 0$. $m^{\star}$ is called the initial element of $m$ in $G_M(\mathfrak{q})$ and $c$ is called the initial degree of $m$.
	Here $[X]_n, n \in \mathbb{Z},$ denotes the $n$-th graded component of an $\mathbb{N}$-graded module $X$.
\end{notation}

For these and related results we refer to \cite{GW} and \cite{SH}. Another feature for the 
investigations will be the use of Koszul complexes. 

\begin{remark} \label{kos-1} ({\sl Koszul complex.})
	(A) Let $\underline{a} = a_1,\ldots,a_t$ denote a system of elements of the ring $A$. The Koszul complex 
	$K_{\bullet}(\underline{a};A)$ is defined as follows: Let $F$ denote a free $A$-module with basis $e_1,\ldots,e_t$. 
	Then $K_i(\underline{a};A) = \bigwedge^i F$ for $i = 1,\ldots,t$. A basis of $K_i(\underline{a};A)$ is given by the wedge 
	products $e_{j_1} \wedge \ldots \wedge e_{j_i}$ for $1 \leq j_1 < \ldots < j_i \leq t$. The boundary 
	homomorphism $K_i(\underline{a};A) \to K_{i-1}(\au;A)$ is defined by 
	\[
	d_{j_1 \ldots j_i} :
	e_{j_1} \wedge \ldots \wedge e_{j_i} \mapsto \sum_{k=1}^{i} (-1)^{k+1} a_{j_k} e_{j_1}\wedge \ldots \wedge \widehat{e_{j_k}} 
	\wedge \ldots \wedge e_{j_i}
	\] 
	on the free generators $e_{j_1} \wedge \ldots \wedge e_{j_i}$. \\
	(B) Another way of the construction of $K_{\bullet}(\au;A)$ is inductively by the mapping cone. 
	To this end let $X$ denote a complex of $A$-modules. Let $a \in A$ denote an element of $A$. The 
	multiplication by $a$ on each $A$-module $X_i, i \in \mathbb{Z},$ induces a morphism of complexes 
	$m_a : X \to X$. We define $K_{\bullet}(a;X)$ as the mapping cone $\operatorname{Mc} (m_a)$. Then we define inductively 
	\[
	K_{\bullet}(a_1,\ldots,a_t;A) = K_{\bullet}(a_t,K_{\bullet}(a_1,\ldots,a_{t-1};A)).
	\]
	It is easily seen that 
	\[
	K_{\bullet}(\au;A) \cong K_{\bullet}(a_1;A) \otimes_A \cdots \otimes_A K_{\bullet}(a_t;A).
	\]
	Therefore it follows that $K_{\bullet}(\au;A) \cong K_{\bullet}(\au_{\sigma};A)$, where 
	$\au_{\sigma} = a_{\sigma(1)}, \ldots, a_{\sigma(t)}$ with a permutation $\sigma$ on $t$ letters. 
	For an $A$-complex $X$ we define $K_{\bullet}(\au;X) = K_{\bullet}(\au;A)\otimes_A X$. We write 
	$H_i(\au;X), i \in \mathbb{Z},$ for the $i$-th homology of $K_{\bullet}(\au;X)$. A short exact 
	sequence of $A$-complexes $0 \to X' \to X \to X'' \to 0$ induces a long exact homology sequence 
	for the Koszul homology
	\[
	\ldots \to H_i(\au;X') \to H_i(\au;X) \to H_i(\au;X'') \to H_{i-1}(\au;X') \to \ldots.
	\]
	Let $\au$ as above a system of $t$ elements in $A$ and $b \in A$. Then the mapping cone construction 
	provides a long exact homology sequence 
	\[
	\ldots \to H_i(\au;X) \to H_i(\au;X) \to H_i(\au,b;X) \to H_{i-1}(\au;X) \to H_{i-1}(\au;X) \to \ldots, 
	\]
	where the homomorphism $H_i(\au;X) \to H_i(\au;X)$ is multiplication by $(-1)^i b$. Moreover, $\au H_i(\au;X) = 0$ 
	for all $i \in \mathbb{Z}$.
\end{remark}

For the proof of the last statement, we recall the following well-known argument. 

\begin{lemma} \label{cone-1}
	Let $X$ denote a complex of $A$-modules. Let $a \in A$ denote an element.  Then $a H_i(a,X) = 0$ for 
	all $i \in \mathbb{Z}.$ 
\end{lemma}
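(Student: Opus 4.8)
The plan is to show that the multiplication-by-$a$ endomorphism of the complex $K_\bullet(a;X) = \operatorname{Mc}(m_a)$ is null-homotopic; since a null-homotopic chain map induces the zero map on homology, this immediately gives $a H_i(a;X) = 0$ for every $i$.

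First I would write the mapping cone out explicitly. With the sign conventions fixed as in Remark~\ref{kos-1}, $K_i(a;X) = X_{i-1} \oplus X_i$ and the differential is $\partial(x,y) = (-d_X x,\, a x + d_X y)$; one checks $\partial^2 = 0$ using $d_X^2 = 0$ and the fact that $d_X$ commutes with multiplication by $a$. The endomorphism $m_a$ of $K_\bullet(a;X)$ is componentwise multiplication by $a$, and it visibly induces multiplication by $a$ on each $H_i(a;X)$.

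Next I would introduce the degree $+1$ map $s_i \colon K_i(a;X) \to K_{i+1}(a;X)$, $s_i(x,y) = (y,0)$, and verify the homotopy identity $\partial s + s\partial = m_a$ by a direct computation: $\partial s(x,y) = \partial(y,0) = (-d_X y,\, a y)$ and $s\partial(x,y) = s(-d_X x,\, a x + d_X y) = (a x + d_X y,\, 0)$, so that $(\partial s + s\partial)(x,y) = (a x,\, a y) = a\cdot(x,y)$. Hence $m_a$ is null-homotopic on $K_\bullet(a;X)$, so it acts as zero on $H_\bullet(a;X)$, i.e. $a H_i(a;X) = 0$ for all $i \in \mathbb{Z}$.

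The argument is entirely formal and the only point requiring care is the bookkeeping of signs in the mapping-cone differential and in $s$; once a single convention is chosen, the verification of $\partial s + s\partial = m_a$ is a one-line calculation, so there is no genuine obstacle. I would remark in passing that, combined with the inductive mapping-cone description $K_\bullet(a_1,\dots,a_t;A) = K_\bullet(a_t, K_\bullet(a_1,\dots,a_{t-1};A))$ recalled above, this lemma yields the stronger statement $\au H_i(\au;X) = 0$ for a system $\au = a_1,\dots,a_t$ by applying it one variable at a time.
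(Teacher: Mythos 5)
Your proof is correct, and it is essentially the argument the paper has in mind: the lemma is stated there without proof as ``well-known,'' but the same computation appears explicitly in the proof of Theorem~\ref{kos-3}(b), where a cycle $(x,y)$ is multiplied into a boundary via $\partial((-1)^i y,0)=b(x,y)$ --- that is exactly your homotopy $s(x,y)=(y,0)$ evaluated on cycles, up to the choice of sign convention in the cone differential. Packaging it as the null-homotopy $\partial s+s\partial=m_a$ is a clean and slightly more robust way to state the same fact, and your closing remark about iterating over $a_1,\dots,a_t$ matches how the paper deduces $\au H_i(\au;X)=0$.
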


\section{The construction of complexes}
First we fix notations for this section. As above let $A$ denote a commutative Noetherian ring 
and $\mathfrak{q} \subseteq A$. Let $\au = a_1,\ldots,a_t$ denote a system of elements of $A$. 
Suppose that $a_i \in \mathfrak{q}^{c_i}$ for some integers $c_i \in \mathbb{N}$ for $i = 1,\ldots,t$. 
Let $M$ denote a finitely generated $A$-module. We define two complexes here, see also \cite{KSch} for more detail. 

\begin{notation} \label{kos-2}
	Let $n$ denote an integer. We define a complex $K_{\bullet}(\au,\mathfrak{q},M;n)$ in the following way:
	\begin{itemize}
		\item[(a)] For $0 \leq i \leq t$ put $K_i(\au,\mathfrak{q},M;n) = \oplus_{1\leq j_1 < \ldots < j_i \leq t} 
		\mathfrak{q}^{n-c_{j_1}- \ldots- c_{j_i}}M$ and $K_i(\au,\mathfrak{q},M;n) = 0$ for $i >t$ or $i <0$.
		\item[(b)] The boundary map $K_i(\au,\mathfrak{q},M;n) \to K_{i-1}(\au,\mathfrak{q},M;n)$ is defined by maps on each of 
		the direct summands $\mathfrak{q}^{n-c_{j_1}- \ldots- c_{j_i}}M$. On $\mathfrak{q}^{n-c_{j_1}- \ldots- c_{j_i}}M$ 
		it is the map given by $d_{j_1 \ldots j_i} \otimes 1_M$ restricted to $\mathfrak{q}^{n-c_{j_1}- \ldots- c_{j_i}}M$, where $d_{j_1 \ldots j_i}$ denotes the homomorphism as defined in \ref{kos-1}.
	\end{itemize}
	It is clear that the image of the map is contained in $\oplus_{1\leq j_1 < \ldots < j_{i-1} \leq t} 
	\mathfrak{q}^{n-c_{j_1}- \ldots- c_{j_{i-1}}}M$. Clearly it is a boundary homomorphism. 
	By the construction it follows that $K_{\bullet}(\au,\mathfrak{q},M;n)$ is a sub complex of the 
	Koszul complex $K_{\bullet}(\au;M)$ for each $n \in \mathbb{N}$.
\end{notation}

Another way for the construction is the following.

\begin{remark} \label{rem-kos}
	Let $R_A(\mathfrak{q})$ and $R_M(\mathfrak{q})$ denote the Rees ring and the Rees module. 
	For $a_i, i = 1,\ldots,t,$ we consider $a_i T^{c_i} \in [R_A(\mathfrak{q})]_{c_i}$. Then we have 
	the system $\underline{aT^c} = a_1T^{c_1}, \ldots,a_tT^{c_t}$ of elements of $R_A(\mathfrak{q})$. Note 
	that $\deg a_iT^{c_i} = c_i, i = 1,\ldots,t$. Then we may consider the Koszul complex 
	$K_{\bullet}(\underline{aT^c};R_M(\mathfrak{q}))$. This is a complex of graded $R_A(\mathfrak{q})$-modules. 
	It is easily seen that the degree $n$-component $[K_{\bullet}(\underline{aT^c};R_M(\mathfrak{q}))]_n$ 
	of $K_{\bullet}(\underline{aT^c};R_M(\mathfrak{q}))$ is the complex $K_{\bullet}(\au,\mathfrak{q},M;n)$ as introduced 
	in \ref{kos-2}. We write $H_i(\au,\mathfrak{q},M;n)$ for the $i$-th 
	homology of $K_{\bullet}(\au,\mathfrak{q},M;n)$ for $i \in \mathbb{Z}$.
\end{remark}

We come now to the definition of second complex.

\begin{definition} \label{def-1}
	With the previous notation we define $\mathcal{L}_{\bullet}(\au,\mathfrak{q},M;n)$ the quotient of the 
	embedding $K_{\bullet}(\au,\mathfrak{q},M;n) \to K_{\bullet}(\au;M)$. That is there is a short exact sequence 
	of complexes
	\[
	0 \to K_{\bullet}(\au,\mathfrak{q},M;n) \to K_{\bullet}(\au;M) \to \mathcal{L}_{\bullet}(\au,\mathfrak{q},M;n) 
	\to 0.
	\]
	Note that $\mathcal{L}_i(\au,\mathfrak{q},M;n) \cong \oplus_{1 \leq j_1 < \ldots < j_i \leq t}M/\mathfrak{q}^{n-c_{j_1}- \ldots- c_{j_i}}M$.
	The boundary maps are those induced by the Koszul complex. We write $L_i(\au,\mathfrak{q},M;n)$ 
	for the $i$-th homology of $\mathcal{L}_{\bullet}(\au,\mathfrak{q},M;n)$ and any $i \in \mathbb{Z}$.
\end{definition}

For a construction by mapping cones we need the following technical result. For a morphism $f: X \to Y$ we write 
$C(f)$ for the mapping cone of $f$.

\begin{lemma} \label{cone-2}
	With the previous notation let $b \in \mathfrak{q}^d$ an element. The multiplication map by $b$ induces 
	the following morphisms 
	\[
	m_b(K) : K_{\bullet}(\au,\mathfrak{q},M;n-d) \to K_{\bullet}(\au,\mathfrak{q},M;n) \text{ and } 
	m_b(\mathcal{L}) : \mathcal{L}_{\bullet}(\au,\mathfrak{q},M;n-d) \to \mathcal{L}_{\bullet}(\au,\mathfrak{q},M;n)
	\] 
	of complexes. They induce isomorphism of complexes 
	\[
	C(m_b(K)) \cong  K_{\bullet}(\au,b,\mathfrak{q},M;n) \text{ and } C(m_b(\mathcal{L})) \cong \mathcal{L}_{\bullet}(\au,b,\mathfrak{q},M;n).
	\]
\end{lemma}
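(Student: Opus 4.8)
The plan is to build both isomorphisms by the same elementary mechanism: identify the underlying graded module of the mapping cone with the underlying module of the bigger complex, then check that the differentials match. Recall from Remark~\ref{kos-1}(B) that the Koszul complex on a sequence is built up by iterated mapping cones, so $K_{\bullet}(\au,b;M)$ is literally $C(m_b)$ where $m_b : K_{\bullet}(\au;M) \to K_{\bullet}(\au;M)$ is multiplication by $b$. The idea is to realize $K_{\bullet}(\au,b,\mathfrak{q},M;n)$ as the analogous cone \emph{inside} this one.

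First I would make the morphisms $m_b(K)$ and $m_b(\mathcal{L})$ explicit. For $m_b(K)$: on the summand $\mathfrak{q}^{(n-d)-c_{j_1}-\ldots-c_{j_i}}M$ of $K_i(\au,\mathfrak{q},M;n-d)$, multiplication by $b \in \mathfrak{q}^d$ lands in $\mathfrak{q}^{n-c_{j_1}-\ldots-c_{j_i}}M$, which is exactly the corresponding summand of $K_i(\au,\mathfrak{q},M;n)$; so $m_b(K)$ is well-defined, and it is a morphism of complexes since $b$ is central and the differentials of \ref{kos-2} are restrictions of the Koszul differential, which commutes with multiplication by $b$. The same check works for $\mathcal{L}$, using that $m_b$ descends to the quotients $M/\mathfrak{q}^{\bullet}M$ (again because $b \in \mathfrak{q}^d$ shifts the filtration correctly), i.e. the short exact sequence of Definition~\ref{def-1} is compatible with the $m_b$'s. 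Next, I would write out $C(m_b(K))$ degree by degree: in degree $i$ it is $K_i(\au,\mathfrak{q},M;n) \oplus K_{i-1}(\au,\mathfrak{q},M;n-d)$, with the usual cone differential $\binom{d_K & \pm m_b(K)}{0 & -d_K}$. On the other hand, expanding the definition in \ref{kos-2} for the sequence $\au,b$ (with $b$ in degree $d$, so $c_{t+1}=d$), the degree-$i$ term $K_i(\au,b,\mathfrak{q},M;n)$ splits according to whether the index $t+1$ (corresponding to $b$) is among $j_1<\ldots<j_i$: the summands \emph{not} involving $t+1$ give $K_i(\au,\mathfrak{q},M;n)$, and those involving $t+1$ give $\oplus_{1\le j_1<\ldots<j_{i-1}\le t}\mathfrak{q}^{n-d-c_{j_1}-\ldots-c_{j_{i-1}}}M = K_{i-1}(\au,\mathfrak{q},M;n-d)$. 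This gives the isomorphism on the level of graded modules; comparing the Koszul differential's block form (the wedge-with-$e_{t+1}$ part produces precisely multiplication by $\pm b$, matching the sign convention $(-1)^i$ of \ref{kos-1}(B)) with the cone differential finishes the first claim. The second claim follows verbatim after applying the exact functor ``pass to the quotient $K_{\bullet}(\au,b;M)/K_{\bullet}(\au,b,\mathfrak{q},M;n)$,'' or more directly by running the identical bookkeeping with $\mathfrak{q}^{n-c_{j_1}-\ldots}M$ replaced by $M/\mathfrak{q}^{n-c_{j_1}-\ldots}M$ throughout; since mapping cones are taken pointwise, $C(m_b(\mathcal{L}))$ is the cokernel of $C(m_b(K)) \to C(m_b(K_{\bullet}(\au;M)))$, which is $\mathcal{L}_{\bullet}(\au,b,\mathfrak{q},M;n)$ by Definition~\ref{def-1}.

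The only place demanding care — and the main obstacle such as it is — is the sign bookkeeping: one must pin down the convention for the cone differential and for the inductive Koszul construction in \ref{kos-1}(B) so that the off-diagonal block in $C(m_b(K))$ agrees on the nose (not merely up to sign on each summand) with the component of the Koszul differential that deletes $e_{t+1}$, including how the $(-1)^i$ twist interacts with reindexing $e_{j_1}\wedge\ldots\wedge e_{j_{i-1}}\wedge e_{t+1}$ versus $e_{j_1}\wedge\ldots\wedge e_{j_i}$. This is routine but is the step where an unforced error would creep in; everything else is a direct unwinding of the definitions in \ref{kos-2} and \ref{def-1}, using only that $b$ is central and $b\in\mathfrak{q}^d$ so that multiplication by $b$ respects both the submodule filtration $\mathfrak{q}^{\bullet}M$ and the induced quotient filtration.
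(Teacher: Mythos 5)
Your argument is correct and is exactly the intended one: the paper's own proof is only the one-line remark that the claim ``follows easily by the structure of the complexes and the mapping cone construction,'' and your proposal fills in precisely those details --- the summands of $K_i(\au,b,\mathfrak{q},M;n)$ split according to whether the index of $b$ occurs, giving $K_i(\au,\mathfrak{q},M;n)\oplus K_{i-1}(\au,\mathfrak{q},M;n-d)$ with the off-diagonal block equal to $\pm b$, and the $\mathcal{L}$-case follows by passing to quotients. Your flag on the sign conventions is the right place to be careful (compare the differential $\partial_i(x,y)=(d_{i-1}(x),d_i(y)+(-1)^{i-1}bx)$ the authors use later in the proof of Theorem \ref{kos-3}(b)), but nothing further is needed.
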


\begin{proof}
	The proof follows easily by the structure of the complexes and the mapping cone construction. 
\end{proof}

We begin with a few properties of the previous complexes. 

\begin{theorem} \label{kos-3}
	Let $\au = a_1,\ldots,a_t$ denote a system of elements of $A$, $\mathfrak{q} \subset A$ an ideal and $M$ a 
	finitely generated $A$-module. Let $n \in \mathbb{N}$ denote an integer. 
	\begin{itemize}
		\item[(a)] $H_i(\au,\mathfrak{q},M;n) \cong H_i(\au_{\sigma},\mathfrak{q},M;n)$ and 
		$L_i(\au,\mathfrak{q},M;n) \cong L_i(\au_{\sigma},\mathfrak{q},M;n)$ for all $i \in \mathbb{Z}$ and any $\sigma$,
		a permutation on $t$ letters.
		\item[(b)] $\au H_i(\au,\mathfrak{q},M;n) = 0$ and $\au L_i(\au,\mathfrak{q},M;n) = 0$ for all $i \in \mathbb{Z}$.
		\item[(c)] $H_i(\au,\mathfrak{q},M;n)$ and $L_i(\au,\mathfrak{q},M;n)$ are finitely generated $A/\au A$-modules 
		for all $i \in \mathbb{Z}$.
	\end{itemize}
\end{theorem}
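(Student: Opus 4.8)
The plan is to establish all three statements by exploiting the two descriptions of the complexes: the explicit direct-sum description in Notation~\ref{kos-2} and Definition~\ref{def-1}, and the graded/mapping-cone description in Remark~\ref{rem-kos} and Lemma~\ref{cone-2}. For part (a), the permutation invariance, I would argue exactly as in the classical Koszul case recalled in Remark~\ref{kos-1}(B): the complex $K_{\bullet}(\underline{aT^c};R_M(\mathfrak q))$ is, up to isomorphism, a tensor product of the one-element Koszul complexes $K_{\bullet}(a_iT^{c_i};R_A(\mathfrak q))$ over $R_A(\mathfrak q)$ tensored with $R_M(\mathfrak q)$, and the tensor product is symmetric in its factors. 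Taking the degree-$n$ graded component is an exact operation on graded modules, so it commutes with homology; hence $H_i(\au,\mathfrak q,M;n)\cong H_i(\au_\sigma,\mathfrak q,M;n)$. The corresponding statement for $L_i$ then follows from the five-lemma applied to the permutation-induced map between the two short exact sequences of complexes in Definition~\ref{def-1}, or equally well from the analogous graded description of $\mathcal L_{\bullet}$ (the degree-$n$ part of the Koszul complex on $\underline{aT^c}$ acting on the quotient $R_M(\mathfrak q)/R_M(\mathfrak q)_{+}$-type module, i.e.\ on $M[T]/R_M(\mathfrak q)$ suitably interpreted).

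For part (b), I would use the mapping-cone description of Lemma~\ref{cone-2} together with Lemma~\ref{cone-1}. Writing $\au = a_1,\dots,a_t$ with $a_i\in\mathfrak q^{c_i}$, Lemma~\ref{cone-2} (applied iteratively, one $a_i$ at a time, starting from the trivial complex) exhibits $K_{\bullet}(\au,\mathfrak q,M;n)$ as an iterated mapping cone in which the $i$-th step is the cone of the multiplication map $m_{a_i}$. By Lemma~\ref{cone-1}, $a_i$ annihilates the homology of a cone of $m_{a_i}$; and the long exact homology sequence attached to a mapping cone of $m_{a_i}$ on a complex already annihilated by $a_1,\dots,a_{i-1}$ shows the next cone is still annihilated by those elements. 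Iterating, $\au$ annihilates $H_i(\au,\mathfrak q,M;n)$ for all $i$. The same argument, using the second isomorphism $C(m_b(\mathcal L))\cong\mathcal L_{\bullet}(\au,b,\mathfrak q,M;n)$ of Lemma~\ref{cone-2}, gives $\au L_i(\au,\mathfrak q,M;n)=0$. Alternatively one can simply observe that $K_{\bullet}(\au,\mathfrak q,M;n)$ and $\mathcal L_{\bullet}(\au,\mathfrak q,M;n)$ are (sub/quotient) complexes of $K_{\bullet}(\au;M)$ with boundary maps induced from it, and chase the standard contracting-homotopy identity $\sum(-1)^{?}(d s + s d)=\pm a_i$ restricted to these subquotients — but the mapping-cone route is cleaner and uses only results already in the excerpt.

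For part (c), finite generation over $A/\au A$: each term $K_i(\au,\mathfrak q,M;n)=\oplus\,\mathfrak q^{\,n-c_{j_1}-\dots-c_{j_i}}M$ is a finitely generated $A$-module (a finite direct sum of submodules of the Noetherian module $M$), hence the homology $H_i(\au,\mathfrak q,M;n)$ is a finitely generated $A$-module; by part (b) it is annihilated by $\au A$, so it is a finitely generated $A/\au A$-module. Likewise $\mathcal L_i(\au,\mathfrak q,M;n)=\oplus\,M/\mathfrak q^{\,n-c_{j_1}-\dots-c_{j_i}}M$ is finitely generated over $A$, so $L_i$ is finitely generated over $A$, and by part (b) over $A/\au A$. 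I do not anticipate a serious obstacle here; the only point requiring a little care is the bookkeeping in part (a) — making sure that ``take the degree-$n$ component'' genuinely commutes with the tensor-product-of-Koszul-complexes identity and with passage to homology — but since taking a graded piece is an exact functor on $\mathbb Z$-graded modules, this is routine. If one prefers to avoid the graded machinery entirely, part (a) can instead be reduced to part (b): the permutation invariance of an iterated mapping cone follows from the associativity/commutativity of the cone construction just as for ordinary Koszul complexes, which is the content of Remark~\ref{kos-1}(B).
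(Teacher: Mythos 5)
Parts (a) and (c) of your proposal are fine and essentially the paper's argument: the permutation isomorphism of the Koszul complex restricts to the graded pieces (equivalently to the subcomplexes $K_{\bullet}(\au,\mathfrak q,M;n)$) and passes to the quotient complexes, while (c) is immediate from (b) together with Noetherianity. The problem is part (b), where your primary route has two genuine gaps. First, Lemma \ref{cone-1} is a statement about the cone of the endomorphism $m_a\colon X\to X$ of a \emph{single} complex; the cone in Lemma \ref{cone-2} is the cone of $m_{a_i}\colon K_{\bullet}(\au',\mathfrak q,M;n-c_i)\to K_{\bullet}(\au',\mathfrak q,M;n)$, a map between two \emph{different} complexes, and for such cones ``$a$ kills the homology of the cone of multiplication by $a$'' is neither a formal consequence of Lemma \ref{cone-1} nor true in general. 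What makes it work here --- and this is exactly the content of the paper's proof --- is the termwise containment $(K_n)_j\subseteq (K_{n-c_i})_j$ (resp.\ the termwise surjection $(\mathcal L_n)_j\twoheadrightarrow(\mathcal L_{n-c_i})_j$), which is what allows the contracting-homotopy computation to be re-run: a cycle $(x,y)$ of the cone is killed, after multiplication by $a_i$, by the boundary of $((-1)^jy,0)$, and this element only exists because $y$, a priori in the target complex, can be regarded as sitting in the source complex one degree up. You never invoke this containment, and without it the step fails. Second, your propagation step (``the long exact homology sequence \dots shows the next cone is still annihilated by those elements'') is also flawed: the long exact sequence exhibits $H_j$ of the cone as an extension of a submodule of $H_{j-1}$ of the source by a quotient of $H_j$ of the target, and an extension of two modules killed by $a_k$ is in general only killed by $a_k^2$. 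The paper sidesteps this by proving only $a_tH_i(\au,\mathfrak q,M;n)=0$ for the \emph{last} element adjoined and then invoking part (a) to permute any $a_j$ into the last position.

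Ironically, the alternative you mention and then dismiss is the clean fix. The standard Koszul homotopy $s_j=e_j\wedge-$, which satisfies $ds_j+s_jd=a_j\cdot\mathrm{id}$ on $K_{\bullet}(\au;M)$, sends the summand of $K_{\bullet}(\au,\mathfrak q,M;n)$ indexed by $\{j_1<\dots<j_i\}$ (with $j\notin\{j_1,\dots,j_i\}$) into the summand indexed by $\{j_1,\dots,j_i,j\}$, i.e.\ it maps $\mathfrak q^{\,n-c_{j_1}-\dots-c_{j_i}}M$ into $\mathfrak q^{\,n-c_{j_1}-\dots-c_{j_i}-c_j}M$, which contains it. Hence $s_j$ preserves the subcomplex and descends to $\mathcal L_{\bullet}(\au,\mathfrak q,M;n)$, so each $a_j$ is null-homotopic on both complexes and (b) follows at once. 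That one-line verification is all that was missing.
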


\begin{proof}
	The statement in (a) follows by virtue of the short exact sequence of complexes in \ref{def-1} and the long exact 
	homology sequence. Note that the homology of Koszul complexes is isomorphic under permutations. 
	
	The claim in (c) is a consequence of (b) since the homology modules $H_i(\au,\mathfrak{q},M;n)$ and $L_i(\au,\mathfrak{q},M;n)$ 
	are finitely generated $A$-modules. 
	
	For the proof of (b) we follow the mapping cone construction of \ref{cone-2} with the arguments of \ref{cone-1}. To this end 
	let $K_n = K_{\bullet}(\au,\mathfrak{q},M;n)$ and $C = C(m_b(K)_n) = K_{\bullet}(\au,b,\mathfrak{q},M;n)$. Then there is a short exact sequence 
	of complexes 
	\[
	0 \to K_n \to C \to K_{n-d}[-1] \to 0. 
	\]
	The differential $\partial_i$ on $(x,y) \in C_i = (K_{n-d})_{i-1} \oplus (K_n)_i$ is given by 
	\[
	\partial_i(x,y) = (d_{i-1}(x),d_i(y) +(-1)^{i-1}b x).
	\]
	Suppose that $\partial_i(x,y) = 0$, i.e., $d_{i-1}(x) =0$ and $d_i(y) = (-1)^i bx$. Then 
	$$(y,0) \in (K_{n-d})_i\oplus (K_n)_{i+1} =C_{i+1} \text{ because } (K_n)_i \subseteq (K_{n-d})_i$$ 
	and therefore $\partial_{i+1}((-1)^iy,0) = b(x,y)$. That is $b H_i(C) = 0$ for all $i \in \mathbb{Z}$. 
	
	In order to show the claim in (b) we use the previous argument. So let us consider 
	$K_{\bullet}(\au,\mathfrak{q},M;n) = C(m_{a_t}(K_{\bullet}(\au',\mathfrak{q},M;n)))$, where 
	$\au' = a_1,\ldots,a_{t-1}$. The previous argument shows $a_t H_i(\au,\mathfrak{q},M;n) = 0$ for all 
	$i \in \mathbb{Z}.$ By view of (a) this finishes the proof in the case of $H_i(\au,\mathfrak{q},M;n)$. 
	
	For the proof of $\au L_i(\au,\mathfrak{q},M;n) = 0$ we follow the same arguments. Instead 
	of the injection 
	$(K_n)_i \subseteq (K_{n-d})_i$ we use the surjection $(\mathcal{L}_n)_i \twoheadrightarrow (\mathcal{L}_{n-d})_i$, where 
	$\mathcal{L}_n = \mathcal{L}_{\bullet}(\au,\mathfrak{q},M;n)$. We skip the details here.
\end{proof}

\section{Euler characteristics}
Let $A$ denote a commutative ring. Let $X$ denote a complex of $A$-modules. 

\begin{definition} \label{def-2}
	Let $X : 0 \to X_n \to \ldots \to X_1 \to X_0 \to 0$ denote a bounded complex of $A$-modules. 
	Suppose that $H_i(X), i = 0, 1, \ldots, n,$ is an $A$-module of finite length. Then 
	\[
	\chi_A(X) = \sum_{i=0}^n (-1)^i \ell_A(H_i(X))
	\]
	is called the Euler characteristic of $X$.
\end{definition}

We collect a few well known facts about Euler characteristics. 

\begin{lemma} \label{char}
	Let $A$ denote a Noetherian commutative ring. 
	\begin{itemize}
		\item[(a)] Let $0 \to X' \to X \to X'' \to 0$ denote a short exact sequence of complexes such that all the homology 
		modules are of finite length. Then 
		$
		\chi_A(X) = \chi_A(X') + \chi_A(X'').
		$
		\item[(b)] Suppose $X: 0 \to X_n \to \ldots \to X_1 \to X_0 \to 0$ is a bounded complex such 
		that $X_i, i = 0,\ldots,n,$ is of finite length. Then 
		$
		\chi_A(X) = \sum_{i=0}^n (-1)^i \ell_A(X_i)
		$
	\end{itemize}
\end{lemma}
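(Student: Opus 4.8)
The plan is to deduce (a) from the long exact homology sequence together with the additivity of length, and then to obtain (b) from (a) by a short induction.

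For (a), the short exact sequence of complexes $0 \to X' \to X \to X'' \to 0$ gives rise to the long exact homology sequence
\[
0 \to H_n(X') \to H_n(X) \to H_n(X'') \to H_{n-1}(X') \to \cdots \to H_0(X') \to H_0(X) \to H_0(X'') \to 0,
\]
which is bounded since all three complexes are bounded, and all of whose terms have finite length by hypothesis. I would first record the elementary fact that in a bounded exact sequence $0 \to N_r \to N_{r-1} \to \cdots \to N_0 \to 0$ of finite-length modules one has $\sum_{j=0}^r (-1)^j \ell_A(N_j) = 0$; this follows from the additivity of length applied to the short exact sequences obtained by splitting off successive kernels. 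Applying this to the long exact homology sequence above and grouping the three terms $H_i(X')$, $H_i(X)$, $H_i(X'')$ attached to each homological degree $i$ — whose signs in the global alternating sum are $(-1)^i$, $-(-1)^i$, $(-1)^i$ up to one common overall sign — yields
\[
\sum_i (-1)^i \big( \ell_A(H_i(X')) - \ell_A(H_i(X)) + \ell_A(H_i(X'')) \big) = 0,
\]
which is exactly $\chi_A(X') - \chi_A(X) + \chi_A(X'') = 0$, the assertion.

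For (b), note first that if each $X_i$ has finite length then so does every subquotient of it, in particular every $H_i(X)$, so $\chi_A(X)$ is defined. I would argue by induction on the number of nonzero modules among $X_0,\dots,X_n$. If only $X_0 \neq 0$, then $H_0(X) = X_0$ and there is nothing to prove. In general, regard $X_0$ placed in homological degree $0$ as a subcomplex $Y$ of $X$ (the differentials are trivially compatible, $d_0 = 0$), with quotient complex $X/Y$ satisfying $(X/Y)_i = X_i$ for $i \geq 1$ and $(X/Y)_0 = 0$. All homology modules of $Y$, $X$, and $X/Y$ have finite length, so part (a) applies and gives $\chi_A(X) = \chi_A(Y) + \chi_A(X/Y) = \ell_A(X_0) + \chi_A(X/Y)$. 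Since $X/Y$ is a bounded complex of finite-length modules with one fewer nonzero term, the inductive hypothesis yields $\chi_A(X/Y) = \sum_{i=1}^n (-1)^i \ell_A(X_i)$, and adding $\ell_A(X_0)$ finishes the proof.

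There is no genuine obstacle here; the only points requiring a little care are the sign and index bookkeeping when passing from the long exact sequence to the alternating sum in (a), and checking in (b) that $X_0 \hookrightarrow X$ really is a subcomplex so that (a) applies. As an alternative to the induction in (b), one may argue directly: writing $Z_i = \Ker d_i$ and $B_i = \im d_{i+1}$, all of finite length, the exact sequences $0 \to Z_i \to X_i \to B_{i-1} \to 0$ and $0 \to B_i \to Z_i \to H_i(X) \to 0$ give $\ell_A(X_i) = \ell_A(Z_i) + \ell_A(B_{i-1})$ and $\ell_A(Z_i) = \ell_A(B_i) + \ell_A(H_i(X))$, and substituting into $\sum_i (-1)^i \ell_A(X_i)$ collapses the $B$-terms telescopically, leaving $\sum_i (-1)^i \ell_A(H_i(X)) = \chi_A(X)$; I would mention this as a remark.
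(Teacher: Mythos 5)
Your proof is correct and follows essentially the same route the paper indicates: part (a) from the long exact homology sequence via the vanishing of the alternating sum of lengths in a bounded exact sequence, and part (b) by induction (the paper's proof is only a two-sentence sketch of exactly this, so you have simply supplied the details, including the sign bookkeeping and the telescoping cycle--boundary alternative). The only nitpick is that your inductive step always splits off degree $0$, which makes no progress once $X_0=0$; induct on the length $n$ and split off the lowest (nonzero) degree instead, or just rely on your direct $Z_i/B_i$ argument.
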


\begin{proof}
	The statement in (a) follows by the long exact cohomology sequence derived by $0 \to X' \to X \to X'' \to 0$. The 
	second statement might be proved by induction on $n$, the length of the complex $X$.
\end{proof}

As an application we get the following result about multiplicities, originally shown by 
\cite{AB} and \cite{jpS}.

\begin{proposition} \label{mult-1}
	Let $(A,\mathfrak{m})$ be a local ring and $a_1,\ldots,a_d \in \mathfrak{m}$ a system of parameters 
	for $M$, a finitely generated $A$-module. Then
	\[
	\chi_A(\au;M) = e_0(\au;M),
	\]
	where $\chi_A(\au;M) = \chi_A(K_{\bullet}(\au;M))$ and $e_0(\au;M)$ denotes the Hilbert-Samuel 
	multiplicity.  
\end{proposition}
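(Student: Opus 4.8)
The plan is to prove this by induction on the number $d$ of parameters, using the associativity/additivity of Euler characteristics from Lemma~\ref{char}(a) together with the mapping cone description of the Koszul complex from Remark~\ref{kos-1}(B). For the base case $d = 0$ (when $\dim_A M = 0$), the Koszul complex is just $M$ concentrated in degree zero, so $\chi_A(\emptyset;M) = \ell_A(M) = e_0(\emptyset;M)$ by convention, and there is nothing to prove. For the inductive step, write $\au' = a_1,\ldots,a_{d-1}$ and $a = a_d$. Since $K_{\bullet}(\au;M) = C(m_a)$ for the multiplication morphism $m_a : K_{\bullet}(\au';M) \to K_{\bullet}(\au';M)$, the short exact sequence of complexes $0 \to K_{\bullet}(\au';M) \to K_{\bullet}(\au;M) \to K_{\bullet}(\au';M)[-1] \to 0$ together with Lemma~\ref{char}(a) would give, once all relevant homologies are of finite length, the telescoping relation that expresses $\chi_A(\au;M)$ in terms of the long exact Koszul homology sequence in which $a$ acts (up to sign) on the $H_i(\au';M)$.

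The key point is then to identify this alternating sum. First I would note that $a$ is a parameter on $M/(\au')M$ viewed appropriately, so $a$ is a nonzerodivisor modulo something of lower dimension; more usefully, from the long exact sequence
\[
\cdots \to H_i(\au';M) \xrightarrow{\pm a} H_i(\au';M) \to H_i(\au;M) \to H_{i-1}(\au';M) \xrightarrow{\pm a} \cdots
\]
one extracts short exact sequences $0 \to \Coker(a \mid H_i(\au';M)) \to H_i(\au;M) \to \Ker(a \mid H_{i-1}(\au';M)) \to 0$. Taking lengths and forming the alternating sum, the kernel and cokernel contributions pair up so that $\chi_A(\au;M) = \sum_i (-1)^i \ell_A\bigl(H_i(\au';M)/aH_i(\au';M)\bigr) - \sum_i (-1)^i \ell_A\bigl((0:_{H_i(\au';M)} a)\bigr)$. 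If $H_i(\au';M)$ had finite length for all $i$ this would collapse to zero; but in general only $\dim_A H_i(\au';M) \le 1$ holds (as $\au'$ together with $a$ is a system of parameters), so instead I would invoke the standard fact that for a finitely generated module $N$ with $\dim_A N \le 1$ and $a$ a parameter, $\ell_A(N/aN) - \ell_A(0:_N a) = e_0(a;N)$, which is additive in short exact sequences and vanishes on modules of dimension $0$. This reduces $\chi_A(\au;M)$ to $\sum_i (-1)^i e_0\bigl(a; H_i(\au';M)\bigr)$.

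The main obstacle — and the crux of the argument — is to identify $\sum_i (-1)^i e_0(a;H_i(\au';M))$ with $e_0(\au;M)$. Here I would pass to the associated graded picture with respect to the $a$-adic filtration, or more cleanly use the additivity of $e_0(\au;-)$ over short exact sequences together with a dimension-shifting argument: one filters $K_{\bullet}(\au';M)$ by its homology and uses that $e_0(\au;M) = e_0(\au;K_{\bullet}(\au';M))$ in the sense of an alternating sum of multiplicities of the homology modules (a Serre-type statement), then recognizes $e_0(\au;H_i(\au';M)) = e_0(a;H_i(\au';M))$ since $\au'$ annihilates $H_i(\au';M)$ by Remark~\ref{kos-1}(B). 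Finally, the classical additivity formula for Hilbert--Samuel multiplicities along the system of parameters (Auslander--Buchsbaum's lemma: $e_0(a_1,\ldots,a_d;M) = \sum_i (-1)^i e_0(a_d; H_i(a_1,\ldots,a_{d-1};M))$ when the latter are supported in dimension $\le 1$) is exactly what closes the induction. The delicate bookkeeping is ensuring every homology module appearing has the dimension bound needed for the finite-length Euler characteristic and the multiplicity formula to make sense simultaneously; this is where I would be most careful, appefating to the fact that $a_1,\ldots,a_d$ being a system of parameters for $M$ forces $\dim_A H_i(a_1,\ldots,a_{d-1};M) \le 1$ for every $i$.
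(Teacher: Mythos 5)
Your reduction in the first two paragraphs is correct: the mapping-cone long exact sequence does split into short exact sequences $0 \to \Coker(a\mid H_i(\au';M)) \to H_i(\au;M) \to \Ker(a\mid H_{i-1}(\au';M)) \to 0$, each $H_i(\au';M)$ is annihilated by $(\au')+\Ann_A M$ and hence has dimension $\leq 1$, and the alternating sum of lengths collapses to $\chi_A(\au;M) = \sum_i (-1)^i e_0\bigl(a;H_i(\au';M)\bigr)$ with $e_0(a;N):=\ell_A(N/aN)-\ell_A(0:_N a)$. The gap is exactly at the step you yourself flag as the crux. The identity $e_0(\au;M)=\sum_i(-1)^i e_0\bigl(a_d;H_i(\au';M)\bigr)$ is not something your induction hypothesis can deliver: the hypothesis concerns $\chi_A(\au';N)=e_0(\au';N)$ for modules on which $\au'$ is a full system of parameters, whereas what you need is a statement about the one-element symbol $e_0(a_d;-)$ applied to the Koszul homology of $\au'$ on $M$ (where $\au'$ is \emph{not} a system of parameters). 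Worse, that identity combined with your computation \emph{is} the proposition, so citing it as ``Auslander--Buchsbaum's lemma'' is circular unless you prove it independently --- and in \cite{AB} and \cite{jpS} it is precisely the part that requires the spectral-sequence (or reduction-to-the-graded) machinery. The alternative sketch you offer does not repair this: if one normalizes $e_0(\au;-)$ to dimension $d$ so that it is additive on short exact sequences, then the alternating sum over the terms of $K_{\bullet}(\au';M)$ is $\sum_i(-1)^i\binom{d-1}{i}\,e_0(\au;M)=0$ for $d\geq 2$, and every homology module has dimension $\leq 1<d$, so the ``Serre-type'' equality you invoke reads $0=0$ and yields nothing.

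For comparison, the paper avoids this induction entirely. It uses the short exact sequence of complexes $0 \to K_{\bullet}(\au,\au,M;n) \to K_{\bullet}(\au;M) \to \mathcal{L}_{\bullet}(\au,\au,M;n) \to 0$ from Definition \ref{def-1} with all $c_i=1$. The left-hand complex is the degree-$n$ piece of the Koszul complex $K_{\bullet}(\underline{aT};R_M(\au))$ over the Rees ring; its homology is a finitely generated module over $R_A(\au)/\underline{aT}R_A(\au)=A/\au A$, hence concentrated in finitely many degrees, so its Euler characteristic vanishes for $n\gg 0$. The right-hand complex has terms of finite length, so Lemma \ref{char}(b) gives $\chi_A(\mathcal{L}_{\bullet}(\au,\au,M;n))=\sum_{i=0}^d(-1)^i\binom{d}{i}\ell_A(M/\au^{n-i}M)$, the $d$-th finite difference of the Hilbert--Samuel function, which equals $e_0(\au;M)$ for $n\gg 0$. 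The bridge from Koszul homology to Hilbert--Samuel data has to be built somewhere; the paper builds it through the Rees-algebra grading, and your proposal never builds it.
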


\begin{proof} Let $\au = a_1,\ldots,a_d$ be the system of parameters and $\au A = \mathfrak{q}$. 
	We choose $c_i = 1, i = 1,\ldots,d$. Then the short 
	exact sequence of \ref{def-1} has the following form 
	\[
	0 \to K_{\bullet}(\au,\au,M;n) \to K_{\bullet}(\au;M) \to \mathcal{L}_{\bullet}(\au,\au,M;n) \to 0.
	\]
	All of the three complexes have homology modules of finite length and therefore 
	$\chi_A(\au;M) = \chi_A(K_{\bullet}(\au,\au,M;n))+ \chi_A(\mathcal{L}_{\bullet}(\au,\au,M;n))$ for all $n \in \mathbb{N}$.
	
	First we show that $\chi_A(K_{\bullet}(\au,\au,M;n)) = 0$ for all $n \gg 0$. To this end recall that 
	$H_i(\au,\au,M;n) = [H_i(\underline{aT};R_M(\au))]_n$. We know that $\underline{aT} (H_i(\underline{aT};R_M(\au))) = 0$ for all 
	$i = 0, \ldots,d$. Therefore $H_i(\underline{aT};R_M(\au))$ is a finitely generated module over 
	$R_A(\au)/\underline{aT} R_A(\au) = A/\au A$. This implies that $[H_i(\underline{aT};R_M(\au))]_n = H_i(\au,\au,M;n) = 0$ 
	for all $n \gg 0$. That is $\chi_A(K_{\bullet}(\au,\au,M;n)) = 0$ for all $n \gg 0$. 
	
	By view of Lemma \ref{char} (b) we get $\chi_A(\mathcal{L}_{\bullet}(\au,\au,M;n)) = \sum_{i=0}^d (-1)^i\binom{d}{i}
	\ell_A(M/\au^{n-i}M)$. For $n \gg 0$ the length $\ell_A(M/\au^nM)$ is given by the Hilbert polynomial 
	$e_0(\au;M) \binom{d+n}{d} + \ldots + e_d(\au;M)$. Therefore, it follows that $\chi_A(\mathcal{L}_{\bullet}(\au,\au,M;n)) =
	e_0(\au;M)$. This completes the argument.
\end{proof}

The more general situation of a system of parameters $\au = a_1,\ldots, a_d$ of a finitely generated $A$-module 
$M$ of a local ring $(A,\mathfrak{m})$ and an ideal $\mathfrak{q} \supset \au$ with $a_i \in \mathfrak{q}^{c_i},
i = 1,\ldots,d$ is investigated in the following.

\begin{proposition} \label{mult-2}
	With the previous notation we have the equality 
	\[
	e_0(\au;M) = c_1 \cdot \ldots \cdot c_d \cdot e_0(\mathfrak{q};M) + \chi_A(K_{\bullet}(\au,\mathfrak{q},M;n))
	\]
	for all $n \gg 0$. In particular, for all $n \gg 0$ the Euler characteristic $\chi_A(K_{\bullet}(\au,\mathfrak{q},M;n))$ 
	is a constant.
\end{proposition}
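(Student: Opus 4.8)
The plan is to imitate the proof of Proposition~\ref{mult-1}, replacing the Koszul complex $K_\bullet(\au,\au,M;n)$ by the more general $K_\bullet(\au,\mathfrak{q},M;n)$ and keeping track of the binomial coefficients coming from the ranks of the free modules. First I would apply $\chi_A$ to the short exact sequence of complexes from Definition~\ref{def-1},
\[
0 \to K_{\bullet}(\au,\mathfrak{q},M;n) \to K_{\bullet}(\au;M) \to \mathcal{L}_{\bullet}(\au,\mathfrak{q},M;n) \to 0,
\]
noting that all homology modules have finite length: $K_\bullet(\au;M)$ because $\au$ is a system of parameters for $M$, and hence the other two by the long exact homology sequence. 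Lemma~\ref{char}(a) then gives $\chi_A(\au;M) = \chi_A(K_{\bullet}(\au,\mathfrak{q},M;n)) + \chi_A(\mathcal{L}_{\bullet}(\au,\mathfrak{q},M;n))$, and by Proposition~\ref{mult-1} the left side equals $e_0(\au;M)$.

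Next I would compute $\chi_A(\mathcal{L}_{\bullet}(\au,\mathfrak{q},M;n))$ directly from the finite-length modules in degree, using Lemma~\ref{char}(b). Since $\mathcal{L}_i(\au,\mathfrak{q},M;n) \cong \oplus_{1\leq j_1 < \ldots < j_i \leq d} M/\mathfrak{q}^{n-c_{j_1}-\ldots-c_{j_i}}M$, for $n \gg 0$ each summand has length given by the Hilbert--Samuel polynomial $P(m) = e_0(\mathfrak{q};M)\binom{m-1+d}{d} + \text{lower degree terms}$ evaluated at $m = n - c_{j_1} - \ldots - c_{j_i}$, so
\[
\chi_A(\mathcal{L}_{\bullet}(\au,\mathfrak{q},M;n)) = \sum_{i=0}^{d} (-1)^i \sum_{1\leq j_1 < \ldots < j_i \leq d} P(n - c_{j_1} - \ldots - c_{j_i}).
\]
The key combinatorial point is that applying the $d$-fold finite difference operator $\prod_{k=1}^{d}(1 - \mathrm{shift by }\, c_k)$ to a polynomial $P$ of degree $d$ with leading coefficient $e_0(\mathfrak{q};M)/d!$ kills all lower-order terms and returns the constant $c_1 \cdots c_d \cdot e_0(\mathfrak{q};M)$; indeed each operator $1 - (\text{shift by } c_k)$ lowers the degree by one and multiplies the leading coefficient by $c_k$ (times the degree). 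This shows $\chi_A(\mathcal{L}_{\bullet}(\au,\mathfrak{q},M;n)) = c_1 \cdots c_d \cdot e_0(\mathfrak{q};M)$ for all $n \gg 0$, and rearranging yields the claimed equality. The ``in particular'' is then immediate: since $e_0(\au;M)$ and $c_1\cdots c_d \cdot e_0(\mathfrak{q};M)$ are both independent of $n$, so is $\chi_A(K_{\bullet}(\au,\mathfrak{q},M;n))$ for $n \gg 0$.

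The main obstacle, and the step deserving the most care, is the combinatorial identity for the iterated difference operator with unequal steps $c_k$; one clean way is to factor the operator as a composition over $k = 1,\ldots,d$ and argue by induction on $d$ that $\big(\prod_{k}(1 - E^{c_k})\big)P$ is constant with the stated value, where $E^{c}$ denotes the shift $P(m) \mapsto P(m-c)$. Alternatively, one may observe that $\sum_{i}(-1)^i \sum_{j_1 < \ldots < j_i} x^{c_{j_1} + \ldots + c_{j_i}} = \prod_{k=1}^d (1 - x^{c_k})$ as a polynomial identity, and that $\prod_k (1 - x^{c_k}) = (1-x)^d \prod_k (1 + x + \ldots + x^{c_k - 1})$ vanishes to order exactly $d$ at $x = 1$ with the product of the second factors equal to $c_1 \cdots c_d$ there; combined with the fact that a degree-$d$ Hilbert polynomial is annihilated to the right order by $(1-E)^d$, this pins down the constant. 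A minor technical point worth a sentence is that one must check the range $n \gg 0$ is uniform, i.e. large enough that $\ell_A(M/\mathfrak{q}^m M)$ agrees with its Hilbert--Samuel polynomial for all the shifted arguments $m = n - c_{j_1} - \ldots - c_{j_i}$ occurring, which holds since there are only finitely many such shifts and each is $\geq n - (c_1 + \ldots + c_d)$.
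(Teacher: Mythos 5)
Your proposal is correct and follows essentially the same route as the paper: the short exact sequence $0 \to K_{\bullet}(\au,\mathfrak{q},M;n) \to K_{\bullet}(\au;M) \to \mathcal{L}_{\bullet}(\au,\mathfrak{q},M;n) \to 0$, additivity of $\chi_A$, Proposition~\ref{mult-1} for the middle term, and Lemma~\ref{char}(b) for $\mathcal{L}_{\bullet}$. The only difference is that you carry out in full the iterated-difference-operator computation showing the alternating sum equals $c_1\cdots c_d\cdot e_0(\mathfrak{q};M)$, a step the paper merely delegates to the reference \cite{BS}; your verification of it (and of the uniformity of the range $n \gg 0$) is sound.
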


\begin{proof}
	The proof follows by the inspection of the short exact sequence of complexes 
	\[
	0 \to K_{\bullet}(\au,\mathfrak{q},M;n) \to K_{\bullet}(\au;M) \to \mathcal{L}_{\bullet}(\au,\mathfrak{q},M;n) \to 0
	\]
	of \ref{def-1}. By view of Proposition \ref{mult-1} we have $e_0(\au;M)$ for the Euler characteristic 
	of the complex in the middle. For the Euler characteristic on the right we get (see \ref{char} (b))
	\[
	\chi_A(\mathcal{L}_{\bullet}(\au,\mathfrak{q},M;n)) = \sum_{i=0}^{d} (-1)^i \sum_{1 \leq j_1 < \ldots < j_i \leq d}
	\ell_A(M/\mathfrak{q}^{n-c_{j_1}-\ldots- c_{j_i}}M),
	\]
	which gives the first summand in the above formula (see also \cite{BS} for the details in the case of $M = A$).
	This finally proves the claim.
\end{proof}

For several reasons it would be interesting to have an answer to the following problem.

\begin{problem} \label{prob}
	With the notation of Proposition \ref{mult-2} it would be of some interest to give 
	an interpretation of $\chi_A(\au,\mathfrak{q},M) := \chi_A(K_{\bullet}(\au,\mathfrak{q},M;n))$ for 
	large $n \gg 0$ independently of $n$. By a slight 
	modification of an argument given in \cite{BS} it follows that $\chi_A(\au,\mathfrak{q},M) \geq 0$. 
\end{problem} 

First we shall use the previous results in order to prove a few formulas for the multiplicity. 
The following Theorem provides a simplified proof of some of the main results of Auslander and Buchsbaum (see \cite{AB}), originally proved by the use of spectral sequences.

\begin{theorem} \label{mult-3}
	Let $(A,\mathfrak{m})$ denote a local ring with $\au = a_1,\ldots,a_d, d = \dim_A M,$ a system 
	of parameters for a finitely generated $A$-module $M$. 
	\begin{itemize}
		\item[(a)] $e_0(\au;M) = e_0(\au';M/aM) - e_0(\au'; 0:_M a)$, where $a = a_1$ and $\au' = a_2,\ldots,a_d$.
		\item[(b)] $e_0(\au;M) = e_0(a,\au';M) + e_0(b,\au';M)$, where $a_1 = a\cdot b$ and 
		$\au' = a_2,\ldots,a_d$.
		\item[(c)] $e_0(\au^{\underline{n}};M) = n \cdot e_0(\au;M)$, where $\au^{\underline{n}} = a_1^{n_1}, \ldots,a_d^{n_d}$ for $\underline{n} = (n_1,\ldots,n_d) \in \mathbb{N}^d$ and $n = n_1 \cdot \ldots \cdot n_d$.  
	\end{itemize}
\end{theorem}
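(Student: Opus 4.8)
The plan is to deduce all three identities from the Euler characteristic formula $\chi_A(\au;M) = e_0(\au;M)$ of Proposition \ref{mult-1}, together with the long exact Koszul homology sequences recalled in Remark \ref{kos-1}. The point is that once everything is phrased as an Euler characteristic, the identities become consequences of additivity of $\chi_A(\cdot)$ along short exact sequences of complexes (Lemma \ref{char}(a)) and the obvious behaviour of Koszul complexes under tensor products. Throughout I would first check that the relevant modules really do have finite length: in each case the sequences appearing are systems of parameters for the respective modules (for instance $\au' = a_2,\ldots,a_d$ is a system of parameters for both $M/a_1M$ and $0 :_M a_1$, since $\dim M/a_1M \le d-1$ and $\dim (0:_M a_1) \le d-1$ while $a_2,\ldots,a_d$ still cut these down to dimension $0$), so all Koszul homologies in sight are of finite length and the Euler characteristics are defined.

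For part (a), I would use the short exact sequence of $A$-modules $0 \to (0:_M a) \to M \xrightarrow{a} M \to M/aM \to 0$, or better, break it into $0 \to (0:_M a) \to M \to aM \to 0$ and $0 \to aM \to M \to M/aM \to 0$. Tensoring with $K_{\bullet}(\au';A)$ and taking the long exact Koszul homology sequences, additivity of $\chi_{A}$ (Lemma \ref{char}(a)) gives $\chi_A(\au';M) - \chi_A(\au';aM) = \chi_A(\au'; 0:_M a)$ and $\chi_A(\au';M) - \chi_A(\au';M/aM) = -\,\chi_A(\au';aM) + (\text{correction})$; combining these and then applying the mapping-cone sequence from Remark \ref{kos-1} that relates $H_i(\au',a;M)$ to the multiplication-by-$a$ map on $H_i(\au';M)$, one obtains $\chi_A(a,\au';M) = \chi_A(\au';M/aM) - \chi_A(\au';0:_M a)$. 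Rewriting via Proposition \ref{mult-1} yields (a). Part (b) is the cleanest: since $a_1 = a \cdot b$, there is a short exact sequence of complexes $0 \to K_{\bullet}(b;A) \to K_{\bullet}(a_1;A) \to K_{\bullet}(a;A) \to 0$ coming from the factorization $m_{a_1} = m_a \circ m_b$ (equivalently, compare the two-term complexes $A \xrightarrow{ab} A$, $A \xrightarrow{a} A$, $A \xrightarrow{b} A$); tensoring with $K_{\bullet}(\au';M)$ preserves exactness since $K_{\bullet}(\au';A)$ is a complex of free modules, and additivity of $\chi_A$ gives $e_0(a_1,\au';M) = e_0(a,\au';M) + e_0(b,\au';M)$ directly. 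Part (c) follows by iterating (b): writing $a_i^{n_i}$ and repeatedly splitting off one factor $a_i$ at a time shows $e_0(a_1^{n_1},\ldots,a_d^{n_d};M) = n_1 \cdots n_d \cdot e_0(\au;M)$, using also the permutation invariance of Koszul homology from Remark \ref{kos-1} to move the active coordinate into first position at each step.

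The main obstacle I anticipate is part (a): one must be careful that the four-term exact sequence $0 \to (0:_M a) \to M \xrightarrow{a} M \to M/aM \to 0$ does \emph{not} split into two short exact sequences of \emph{complexes} in a way that is immediately compatible with tensoring by $K_{\bullet}(\au';A)$ — rather, the natural route is through the mapping-cone description $K_{\bullet}(a,\au';M) = C(m_a : K_{\bullet}(\au';M) \to K_{\bullet}(\au';M))$, whose associated long exact sequence reads $\cdots \to H_i(\au';M) \xrightarrow{\pm a} H_i(\au';M) \to H_i(a,\au';M) \to H_{i-1}(\au';M) \xrightarrow{\pm a} \cdots$. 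Taking alternating sums of lengths, the two copies of $\chi_A(\au';M)$ formally cancel, leaving $\chi_A(a,\au';M)$ equal to an alternating sum of lengths of kernels and cokernels of multiplication by $a$ on the $H_i(\au';M)$; identifying $\bigoplus_i (-1)^i[\ell_A(\ker) - \ell_A(\operatorname{coker})]$ with $\chi_A(\au';M/aM) - \chi_A(\au';0:_M a)$ requires the snake-lemma comparison of the two Koszul complexes $K_{\bullet}(\au';M) \xrightarrow{a} K_{\bullet}(\au';M)$ with $K_{\bullet}(\au';M/aM)$ and $K_{\bullet}(\au';0:_M a)$. This bookkeeping is the only genuinely delicate point; everything else is formal manipulation of Euler characteristics.
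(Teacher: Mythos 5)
Your overall strategy (reduce everything to Euler characteristics via Proposition \ref{mult-1} and additivity) is the same as the paper's, but part (b) as written contains a genuine error, and since (c) is deduced from (b) the gap propagates. There is no short exact sequence of complexes $0 \to K_{\bullet}(b;A) \to K_{\bullet}(ab;A) \to K_{\bullet}(a;A) \to 0$: termwise in degrees $0$ and $1$ this would be $0 \to A \to A \to A \to 0$, which is never exact for $A \neq 0$. The honest comparison map $K_{\bullet}(b;M) \to K_{\bullet}(ab;M)$ (identity in degree $1$, multiplication by $a$ in degree $0$) has kernel $0:_M a$ and cokernel $M/aM$, i.e.\ one only gets the four-term exact sequence
\[
0 \to 0:_M a \to K_{\bullet}(b;M) \to K_{\bullet}(ab;M) \to M/aM \to 0,
\]
and these two extra terms are exactly why the paper proves (a) first --- it is the case $b=1$ of this sequence, where $K_{\bullet}(1,\au';M)$ is exact --- and then obtains (b) by feeding (a) back into the general-$b$ case. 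Your ``clean'' route to (b) bypasses (a) only because it assumes a sequence that does not exist; your justification (tensoring an exact sequence with the free complex $K_{\bullet}(\au';A)$) applies to the four-term sequence, not to the claimed short one. A direct proof of (b) can be rescued by replacing the short exact sequence with the distinguished triangle $K_{\bullet}(b;A) \to K_{\bullet}(ab;A) \to K_{\bullet}(a;A) \to K_{\bullet}(b;A)[1]$ coming from the octahedral axiom applied to $A \xrightarrow{b} A \xrightarrow{a} A$, but that is a different argument from the one you give and still needs the resulting long exact homology sequence together with the finiteness of all three Euler characteristics.

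Two smaller points on (a). The quantities $\chi_A(\au';M)$ and $\chi_A(\au';aM)$ you write down in the intermediate step are not defined: $\au'$ consists of only $d-1$ elements, so $H_i(\au';M)$ need not have finite length. Your fallback via the mapping-cone sequence for $C(m_a)$ is sound in principle --- the kernels and cokernels of $a$ on $H_i(\au';M)$ are annihilated by the full system $\au$ and are therefore of finite length, so the alternating sums make sense --- but the snake-lemma bookkeeping you defer is exactly the content that needs to be supplied, and the paper's four-term sequence avoids it entirely: breaking it into two short exact sequences of complexes whose homologies are all of finite length gives $\chi_A(a,\au';M) = \chi_A(\au';M/aM) - \chi_A(\au';0:_M a)$ in one line. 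I would recommend restructuring along the paper's lines: establish the four-term sequence, deduce (a) from the case $b=1$, deduce (b) from the general case plus (a), and then (c) by induction and permutability as you indicate.
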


\begin{proof}
	We start with the comparison of the Koszul complexes $K_{\bullet}(b;M)$ and $K_{\bullet}(ab;M)$. This leads to the following commutative diagram with exact rows
	\[
	\xymatrix{
		&  &    & 0 \ar[d]   & 0 \ar[d] &  &  &  &  \\ 
		&  & 0  \ar[r]  & M \ar[d]_{b}  \ar@{=}[r] & M  \ar[r] \ar[d]^{ab} & 0 &  &  \\ 
		& 0  \ar[r] & 0:_M a \ar[r]  & M \ar[d]  \ar[r]^{a} & M \ar[d]  \ar[r] & M/aM \ar[r] & 0 \\ 
		&     &    & 0 & 0 &  &  & 
		}
	\]
	where the columns are the Koszul complexes. That is, we have an exact sequence of complexes 
		\[
		0 \to 0:_M a \to K_{\bullet}(b;M) \to K_{\bullet}(ab;M) \to M/aM \to 0.
		\]
	We tensor now by $K_{\bullet}(\au';A)$, a bounded 
	complex of free $A$-modules. By the definition it induces an exact sequence of complexes
	\[
	0 \to K_{\bullet}(\au';0:_Ma) \to K_{\bullet}(b,\au';M) \to K_{\bullet}(ab,\au';M) 
	\to K_{\bullet}(\au';M/aM) \to 0.
	\]
	Now we inspect the previous sequence in the case $b =1$. Then it follows that $ K_{\bullet}(1,\au';M)$ 
	is exact. By virtue of the Euler characteristic and by \ref{mult-1}, the statement in (a) follows. 
	
	Next we look at the case of a general $b$. With the previous result (a) it implies that 
	\[
	e_0(ab,\au';M) = e_0(a,\au';M) +e_0(b,\au';M)
	\]
	which proves (b).
	
	By induction and permutability of the sequence this yields the statement in (c).
\end{proof}

It is noteworthy to say that $e_0(\au';0:_Ma) \not= 0$ if and only if $\dim_A 0:_Ma = \dim_A M -1$.

\section{On an equality}
As before let $(A,\mathfrak{m})$ denote a local ring with $M$ a finitely generated $A$-module. 
Let $\mathfrak{q}$ denote an ideal of $A$ such that $\ell_A(M/\mathfrak{q}M) < \infty$. 
For a system of parameters $\au = a_1,\ldots,a_d, d = \dim_A M,$ of $M$ suppose that 
$a_i \in \mathfrak{q}^{c_i} \setminus \mathfrak{q}^{c_i+1}, i = 1,\ldots,d$. Besides of the 
Rees module $R_M(\mathfrak{q})$ we need the following.

\begin{definition} \label{eq-1}
	With the previous notation we investigate the ring $\mathfrak{R} = A[a_1T^{c_1},\ldots,a_dT^{c_d}] \subset A[T]$. 
	Note that $\mathfrak{R}$ is a graded $A$-algebra. Because of $a_i \mathfrak{q}^n \subseteq \mathfrak{q}^{n+c_i}, 
	i = 1,\ldots,d$ it follows that the Rees module $R_M(\mathfrak{q})$ is an $\mathfrak{R}$-module. 
\end{definition}

In the following we shall explore when $R_M(\mathfrak{q})$ is a finitely generated $\mathfrak{R}$-module. 

\begin{lemma} \label{eq-2}
	With the previous notation the following conditions are equivalent:
	\begin{itemize}
		\item[(i)] $R_M(\mathfrak{q})$ is a finitely generated $\mathfrak{R}$-module.
		\item[(ii)] There is a positive integer $k$ such that $\mathfrak{q}^n M = \sum_{i=1}^{d} a_i\mathfrak{q}^{n-c_i}M$ for all $n > k$. 
		\item[(iii)] The initial forms $a_1^{\star}, \ldots, a_d^{\star}$ are a system of parameters 
		of $G_M(\mathfrak{q})$.
	\end{itemize}
\end{lemma}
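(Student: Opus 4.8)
The plan is to prove the equivalences \textbf{(i)} $\Leftrightarrow$ \textbf{(ii)} $\Leftrightarrow$ \textbf{(iii)} by establishing the cycle of implications, with the graded Nakayama lemma and standard dimension theory for $\mathbb{N}$-graded modules over $G_A(\mathfrak q)$ doing most of the work.

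First I would treat \textbf{(i)} $\Leftrightarrow$ \textbf{(ii)}. The ring $\mathfrak R = A[a_1T^{c_1},\ldots,a_dT^{c_d}]$ sits inside $R_A(\mathfrak q)$, and $R_M(\mathfrak q)$ is already a finitely generated $R_A(\mathfrak q)$-module since $M$ is finitely generated over $A$. The point is that $\mathfrak R$ and $R_A(\mathfrak q)$ agree in low degrees only partially: $[\mathfrak R]_0 = A = [R_A(\mathfrak q)]_0$. Now $R_M(\mathfrak q)$ is finitely generated over $\mathfrak R$ iff it is generated over $\mathfrak R$ by finitely many homogeneous elements, i.e. iff there is $k$ such that $[R_M(\mathfrak q)]_n = \sum_{i=1}^d (a_iT^{c_i})\cdot [R_M(\mathfrak q)]_{n-c_i}$ for all $n > k$; unwinding the identification $[R_M(\mathfrak q)]_n = \mathfrak q^n M$, this reads exactly $\mathfrak q^n M = \sum_{i=1}^d a_i \mathfrak q^{n-c_i} M$ for $n > k$, which is \textbf{(ii)}. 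One direction is immediate from finite generation; for the converse one takes generators of $\bigoplus_{n \le k}\mathfrak q^n M$ as an $A$-module together with the relation in degrees $> k$ to see the whole Rees module is generated in degrees $\le k$ over $\mathfrak R$.

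Next, \textbf{(ii)} $\Leftrightarrow$ \textbf{(iii)} is the heart of the matter and should be phrased through the form ring. Passing to $G_M(\mathfrak q) = \bigoplus_n \mathfrak q^n M/\mathfrak q^{n+1}M$, the element $a_i^{\star} \in [G_A(\mathfrak q)]_{c_i}$ acts on $G_M(\mathfrak q)$, and the quotient $G_M(\mathfrak q)/(a_1^{\star},\ldots,a_d^{\star})G_M(\mathfrak q)$ has degree-$n$ component $\mathfrak q^n M/(\mathfrak q^{n+1}M + \sum_i a_i\mathfrak q^{n-c_i}M)$. I would argue: the sequence $a_1^{\star},\ldots,a_d^{\star}$ is a system of parameters of $G_M(\mathfrak q)$ iff $G_M(\mathfrak q)/(a_1^{\star},\ldots,a_d^{\star})G_M(\mathfrak q)$ has dimension $0$, and since this quotient is a finitely generated $\mathbb{N}$-graded module over the Noetherian graded ring $G_A(\mathfrak q)$ whose degree-zero piece $A/\mathfrak q$ is Artinian (because $\ell_A(M/\mathfrak q M) < \infty$ forces $\mathfrak q$ to be $\mathfrak m$-primary on $\Supp M$), dimension $0$ is equivalent to the module being concentrated in finitely many degrees — i.e. vanishing for $n \gg 0$ — which is precisely the relation in \textbf{(ii)}. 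Conversely, if \textbf{(ii)} holds the quotient vanishes in high degrees, hence has finite length, hence dimension $0$, hence $d = \dim G_M(\mathfrak q)$ elements cutting it down give a system of parameters; here one uses $\dim G_M(\mathfrak q) = \dim_A M = d$. The graded Nakayama lemma (or rather the elementary fact that a finitely generated graded module over a Noetherian $\mathbb N$-graded ring with Artinian degree-zero part has dimension $0$ iff it is bounded in degree) is the clean tool for the back-and-forth.

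I would close the cycle with \textbf{(iii)} $\Rightarrow$ \textbf{(i)} directly if it is shorter than going through \textbf{(ii)}, but most naturally the three conditions are linked \textbf{(i)} $\Leftrightarrow$ \textbf{(ii)} and \textbf{(ii)} $\Leftrightarrow$ \textbf{(iii)}, so nothing more is needed. \emph{The main obstacle} I anticipate is the bookkeeping in \textbf{(ii)} $\Leftrightarrow$ \textbf{(iii)}: one must be careful that a system of parameters for a graded module is being detected correctly by the vanishing-in-high-degrees of the quotient (this uses that $G_A(\mathfrak q)$ is positively graded over an Artinian $[G_A(\mathfrak q)]_0$, so that the irrelevant maximal ideal is the only relevant one up to radical), and that $\dim G_M(\mathfrak q) = d$; these are standard but deserve an explicit sentence. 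Everything else — the identification of graded pieces of Rees and form modules, the exact sequences from Notation \ref{not-1}(C), and the finite-generation characterization — is routine.
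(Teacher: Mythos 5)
Your overall route is the same as the paper's: (i)$\Leftrightarrow$(ii) by reading off homogeneous generation of $R_M(\mathfrak q)$ over $\mathfrak R$ degree by degree, and (ii)$\Leftrightarrow$(iii) by passing to the quotient $G_M(\mathfrak q)/(a_1^{\star},\ldots,a_d^{\star})G_M(\mathfrak q)$ and using that, for a finitely generated graded module over $G_A(\mathfrak q)$ with Artinian degree-zero part, dimension $0$ is the same as vanishing in large degrees. Both halves are correct in substance.

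There is one small but genuine slip in the second half. The degree-$n$ component of the quotient is
\[
\mathfrak q^{n}M\big/\bigl(\mathfrak q^{n+1}M+\textstyle\sum_{i=1}^{d} a_i\mathfrak q^{\,n-c_i}M\bigr),
\]
so its vanishing for $n>k$ gives $\mathfrak q^{n}M=\sum_i a_i\mathfrak q^{\,n-c_i}M+\mathfrak q^{n+1}M$, which is \emph{not} ``precisely the relation in (ii)'': condition (ii) has no $\mathfrak q^{n+1}M$ term. To remove it you must apply the ordinary Nakayama lemma to the finitely generated $A$-module $\mathfrak q^{n}M/\sum_i a_i\mathfrak q^{\,n-c_i}M$, which satisfies $N=\mathfrak q N$ because $\mathfrak q^{n+1}M\subseteq\mathfrak q\cdot\mathfrak q^{n}M$ (this is exactly the step the paper invokes with ``By Nakayama Lemma''). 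You do name the graded Nakayama lemma as a tool, but you attach it to the wrong step (the dimension-zero-iff-bounded-in-degree fact), and the text as written asserts an identification of the two conditions that is false without this extra argument. With that one sentence added, your proof is complete and matches the paper's.
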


\begin{proof}
	First we prove (i) $\Longrightarrow$ (ii). If $R_M(\mathfrak{q})$ is finitely generated over $\mathfrak{R}$, 
	then $$R_M(\mathfrak{q}) = \mathfrak{R}(m_1T^{d_1},\ldots , m_rT^{d_r}) \text{ for } m_iT^{d_i} \in R_M(\mathfrak{q})_{d_i}.$$ 
	Let $k = \max \{d_1,\ldots , d_r\}$ and $n > k$. Let $m \in \mathfrak{q}^nM$ and therefore 
	\[
	m T^n = \sum_{j=1}^{r} (\sum_{\underline{\alpha}_j} r_{\underline{\alpha}}^{(j)}(a_1T^{c_1})^{\alpha_{j,1}} 
	\cdots (a_dT^{c_d})^{\alpha_{j,d}})\,  m_j T^{d_j} \text{ with } r_{\underline{\alpha}}^{(j)} \in A
	\] 
	and $d_j + \sum c_i \alpha_{j,i} \geq n$. The last inequality implies 
	\[
	\sum c_i \alpha_{j,i} \geq n - d_j \geq n-k > 0.
	\]
	Therefore for each $j$ there is an $i$ such that $\alpha_{j,i} > 0$ and 
	$mT^n \in \sum_{i=1}^{d} a_i \mathfrak{q}^{n-c_i}MT^n$, 
	which proves the statement in (ii). 
	
	For the converse we claim $R_M(\mathfrak{q}) = \mathfrak{R}(M,\mathfrak{q}MT, \ldots, \mathfrak{q}^kM T^k).$ 
	Let $m T^n$ with $m \in \mathfrak{q}^nM$. If $n \leq k$ then clearly $mT^n \in \mathfrak{R}(M,\mathfrak{q}MT, \ldots, \mathfrak{q}^kM T^k)$. If $n > k$, then $m = \sum_{i=1}^{d} a_i m_i, m_i \in \mathfrak{q}^{n-c_i}M$.
	By induction $m_i T^{n-c_i} \in \mathfrak{R}(M,\mathfrak{q}MT, \ldots, \mathfrak{q}^kM T^k)$ and therefore 
	$mT^n = \sum_{i=1}^{d} a_iT^{c_i}m_i T^{n-c_i} \in\mathfrak{R}(M,\mathfrak{q}MT, \ldots, \mathfrak{q}^kM T^k)$, 
	which finishes the argument.
	
	For the equivalence of (ii) and (iii) recall that $a_1^{\star}, \ldots, a_d^{\star}$ is a homogeneous 
	system of parameters if and only if $[G_M(\mathfrak{q})/(a_1^{\star}, \ldots, a_d^{\star})G_M(\mathfrak{q})]_n = 0$ for all $n > k$. This is equivalent to 
	\[
	\mathfrak{q}^nM = \sum_{i=1}^{d} a_i \mathfrak{q}^{n-c_i}M + \mathfrak{q}^{n+1}M \text{ for all } n > k.
	\] 
	By Nakayama Lemma this is equivalent to the condition in (ii). 
\end{proof}

For the following we define $c = c_1 \cdot \ldots \cdot c_d$ and $e_i = c/c_i, i = 1,\ldots,d$. Then 
$\au^{\underline{e}} = a_1^{e_1}, \ldots,a_d^{e_d}$ is a system of parameters of $M$ and 
$a_i^{e_i} \in \mathfrak{q}^c$. With these notation we get the following commutative diagram
\[
\begin{array}{ccc}
R_M(\mathfrak{q}^c) & \subset & R_M(\mathfrak{q})\\
 \cup & & \cup \\
 \mathfrak{S} & \subset & \mathfrak{R},
\end{array}
\]
where $\mathfrak{S} = A[a_1^{e_1}T^c, \ldots, a_d^{e_d}T^c]$. It is easily seen that $\mathfrak{S} \subset  \mathfrak{R}$ is a finitely generated extension since it is integral. Note that $a_i^{e_i}T^c = (a_iT^{c_i})^{e_i}, 
i = 1,\ldots,d$.

\begin{corollary} \label{eq-3}
	With the previous notation the following conditions are equivalent:
	\begin{itemize}
		\item[(i)] The initial forms $a_1^{\star}, \ldots, a_d^{\star}$ are a system of parameters 
		of $G_M(\mathfrak{q})$.
		\item[(ii)] There is an integer $k > 0$ such that $\mathfrak{q}^{nc} M = \sum_{i=1}^{d} a_i^{e_i} \mathfrak{q}^{nc-c}M$ for all $n > k$.
		\item[(iii)] The initial forms $(a_1^{e_1})^\star,\ldots,(a_d^{e_d})^\star$ in $G_A(\mathfrak{q}^c)$ are 
		a system of parameters in $G_M(\mathfrak{q}^c)$.
	\end{itemize}
\end{corollary}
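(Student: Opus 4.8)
The plan is to chain together the characterizations of Lemma \ref{eq-2}, applied twice: once for the pair $(\au,\mathfrak{q})$ and once for the pair $(\au^{\underline e},\mathfrak{q}^c)$, and then to bridge the two via the commutative diagram of inclusions of Rees modules displayed just before the statement. The key observation is that (i) of the corollary is condition (iii) of Lemma \ref{eq-2} for $(\au,\mathfrak{q})$, while (iii) of the corollary is condition (iii) of Lemma \ref{eq-2} for the sequence $a_1^{e_1},\ldots,a_d^{e_d}$ with respect to the ideal $\mathfrak{q}^c$ (noting $a_i^{e_i}\in\mathfrak{q}^c\setminus\mathfrak{q}^{c+1}$, so the relevant $c_i$ there are all equal to $1$); and condition (ii) of the corollary is precisely condition (ii) of Lemma \ref{eq-2} for that same sequence and ideal. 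Hence the equivalence (ii) $\Longleftrightarrow$ (iii) of the corollary is immediate from Lemma \ref{eq-2}, and only the equivalence with (i) remains.

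For that, first I would translate everything into the language of finite generation of Rees modules. By Lemma \ref{eq-2}, statement (i) of the corollary says that $R_M(\mathfrak{q})$ is finitely generated over $\mathfrak{R} = A[a_1T^{c_1},\ldots,a_dT^{c_d}]$, while statements (ii)/(iii) say that $R_M(\mathfrak{q}^c)$ is finitely generated over $\mathfrak{S} = A[a_1^{e_1}T^c,\ldots,a_d^{e_d}T^c]$. So it suffices to prove: $R_M(\mathfrak{q})$ is a finite $\mathfrak{R}$-module if and only if $R_M(\mathfrak{q}^c)$ is a finite $\mathfrak{S}$-module. Here is where the diagram enters. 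Since $\mathfrak{S}\subset\mathfrak{R}$ is a finite (integral) extension and $R_M(\mathfrak{q}^c)\subset R_M(\mathfrak{q})$, one direction is routine: if $R_M(\mathfrak{q})$ is finite over $\mathfrak{R}$, it is finite over $\mathfrak{S}$, and then the $\mathfrak{S}$-submodule $R_M(\mathfrak{q}^c)$ is finite over the Noetherian ring $\mathfrak{S}$ as well. For the converse, suppose $R_M(\mathfrak{q}^c)$ is finite over $\mathfrak{S}$; I would observe that $R_M(\mathfrak{q})$ is generated over $R_M(\mathfrak{q}^c)$ by the finitely many components in degrees $0,1,\ldots,c-1$ (this is the standard Veronese-type fact that $R_M(\mathfrak{q})$ is a finite module over its $c$-th Veronese subring $R_M(\mathfrak{q}^c)$), hence $R_M(\mathfrak{q})$ is finite over $R_M(\mathfrak{q}^c)$, which in turn is finite over $\mathfrak{S}\subseteq\mathfrak{R}$, so $R_M(\mathfrak{q})$ is finite over $\mathfrak{R}$.

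Alternatively, and perhaps more cleanly, one can argue entirely at the level of the graded pieces: condition (ii) of Lemma \ref{eq-2} for $(\au,\mathfrak{q})$ reads $\mathfrak{q}^nM=\sum_i a_i\mathfrak{q}^{n-c_i}M$ for $n\gg0$, and raising to Veronese degrees $nc$ and iterating $e_i$ times shows this is equivalent to $\mathfrak{q}^{nc}M=\sum_i a_i^{e_i}\mathfrak{q}^{nc-c}M$ for $n\gg0$, i.e. condition (ii) of the corollary; the passage between "for all $n>k$" and "for $n$ in an arithmetic progression" is handled by Nakayama together with the fact that the intermediate inclusions $\mathfrak{q}^{n+1}M\subseteq\sum_i a_i\mathfrak{q}^{n+1-c_i}M$ propagate. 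The main obstacle, and the step deserving care, is exactly this bookkeeping: showing that the "eventually" in condition (ii) for $\mathfrak{q}$ is genuinely equivalent to the "eventually" in condition (ii) for $\mathfrak{q}^c$ after passing through the $e_i$-th powers — one must check that no information is lost in the non-Veronese degrees, which is why routing through the finite generation statement (where Noetherianity of $\mathfrak{S}$ and $\mathfrak{R}$ does the work for free) is the safer route. Everything else is a direct citation of Lemma \ref{eq-2} and the already-noted integrality of $\mathfrak{S}\subset\mathfrak{R}$.
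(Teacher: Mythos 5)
Your proposal is correct and follows essentially the same route as the paper: both deduce (ii) $\Leftrightarrow$ (iii) from Lemma \ref{eq-2} applied to $a_1^{e_1},\ldots,a_d^{e_d}$ and $\mathfrak{q}^c$, and both obtain (i) $\Leftrightarrow$ (ii) by translating everything into finite generation of $R_M(\mathfrak{q})$ over $\mathfrak{R}$ and of $R_M(\mathfrak{q}^c)$ over $\mathfrak{S}$, bridged by the module-finiteness of $\mathfrak{S}\subset\mathfrak{R}$ and the Veronese-type decomposition $R_M(\mathfrak{q})\cong\bigoplus_{j=0}^{c-1}\mathfrak{q}^jT^j\,R_M(\mathfrak{q}^c)$. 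The paper's proof is terser (it merely cites Artin--Rees and Lemma \ref{eq-2}), but the substance is the same.
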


\begin{proof}
	We have the isomorphism $R_M(\mathfrak{q}^c) \cong M[\mathfrak{q}^cT^c] =: \mathfrak{M}$ and 
	$$
	R_M(\mathfrak{q}) \cong \mathfrak{M} \oplus \mathfrak{q}T \mathfrak{M} \oplus \ldots 
	\oplus  \mathfrak{q}^{c-1}T^{c-1} \mathfrak{M}.
	$$ 
	Whence $\mathfrak{S} \subset R_M(\mathfrak{q}^c)$ is a finitely generated extension if and only if 
	$\mathfrak{S} \subset R_M(\mathfrak{q})$ is a finitely generated extension. Then Artin-Rees 
	lemma yields the equivalence of (i) and (ii) by view of \ref{eq-2}.
	
	The equivalence of (ii) and (iii) follows by Lemma \ref{eq-2}. 
\end{proof}

\begin{corollary} \label{eq-4}
	With the previous notation suppose that $a_1^{\star}, \ldots, a_d^{\star}$ forms a system of parameters 
	in $G_M(\mathfrak{q})$. Then $e_0(\au;M) = c_1 \cdot \ldots \cdot c_d \cdot e_0(\mathfrak{q};M)$ and therefore   $\chi_A(K_{\bullet}(\au,\mathfrak{q},M;n)) = 0$ for $n \gg 0$.
\end{corollary}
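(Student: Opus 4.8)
The plan is to reduce everything to the case treated in Proposition \ref{mult-1} by passing to a power of the ideal. Under the hypothesis that $a_1^{\star},\ldots,a_d^{\star}$ is a system of parameters of $G_M(\mathfrak{q})$, Corollary \ref{eq-3} tells us that $(a_1^{e_1})^{\star},\ldots,(a_d^{e_d})^{\star}$ is a system of parameters of $G_M(\mathfrak{q}^c)$, where $e_i = c/c_i$ and each $a_i^{e_i}$ lies in $\mathfrak{q}^c$ with initial degree exactly $1$ relative to the filtration $\{\mathfrak{q}^{nc}M\}$. The key point is that in this rescaled situation all the initial exponents equal $1$, so Proposition \ref{mult-1} applies directly and forces $\chi_A(K_{\bullet}(\au^{\underline e},\mathfrak{q}^c,M;n)) = 0$ for $n \gg 0$, equivalently (by Proposition \ref{mult-2} applied to $\au^{\underline e}$ and $\mathfrak{q}^c$ with all $c_i = 1$)
\[
e_0(\au^{\underline e};M) = e_0(\mathfrak{q}^c;M).
\]

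Next I would translate this equality back to the original data using standard multiplicity identities. On the left, Theorem \ref{mult-3}(c) gives $e_0(\au^{\underline e};M) = (e_1 \cdots e_d)\, e_0(\au;M) = (c^{d-1}/c)\,e_0(\au;M) = c^{d-1}\,e_0(\au;M)/\,\!$—more precisely $e_1\cdots e_d = c^d/(c_1\cdots c_d) = c^{d-1}$, so $e_0(\au^{\underline e};M) = c^{d-1} e_0(\au;M)$. On the right, the well-known scaling formula for the Hilbert--Samuel multiplicity of a power of an ideal gives $e_0(\mathfrak{q}^c;M) = c^d\, e_0(\mathfrak{q};M)$ (this follows from comparing Hilbert--Samuel polynomials, since $\ell_A(M/\mathfrak{q}^{cn}M)$ has leading term $e_0(\mathfrak{q};M)(cn)^d/d!$). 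Equating the two sides yields $c^{d-1} e_0(\au;M) = c^d\, e_0(\mathfrak{q};M)$, and dividing by $c^{d-1}$ gives exactly $e_0(\au;M) = c\cdot e_0(\mathfrak{q};M) = c_1\cdots c_d\cdot e_0(\mathfrak{q};M)$. The final assertion $\chi_A(K_{\bullet}(\au,\mathfrak{q},M;n)) = 0$ for $n \gg 0$ is then immediate from Proposition \ref{mult-2}, which expresses this Euler characteristic as $e_0(\au;M) - c_1\cdots c_d\cdot e_0(\mathfrak{q};M)$.

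The only place requiring care—and the main obstacle—is verifying that $a_i^{e_i}$ genuinely has initial degree $1$ with respect to the $\mathfrak{q}^c$-adic filtration, i.e. that $(a_i^{e_i})^{\star}$ is a nonzero element of degree $1$ in $G_M(\mathfrak{q}^c)$, so that Proposition \ref{mult-1} is legitimately applicable with all exponents equal to $1$. This is precisely the content of the equivalence (i)$\Leftrightarrow$(iii) in Corollary \ref{eq-3}: if $(a_i^{e_i})^{\star}$ were zero (equivalently, $a_i^{e_i}\in\mathfrak{q}^{2c}M\cap\!\cdots$), the sequence could not be a system of parameters of $G_M(\mathfrak{q}^c)$, contradicting \ref{eq-3}(iii). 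Alternatively, one checks directly from $a_i \in \mathfrak{q}^{c_i}\setminus\mathfrak{q}^{c_i+1}$ that $(a_i^{e_i})^{\star} = (a_i^{\star})^{e_i}$ in $G_A(\mathfrak{q})$, which is nonzero since $G_A(\mathfrak{q})$—or at least the relevant portion acting on $G_M(\mathfrak{q})$—behaves well under the parameter hypothesis; but invoking \ref{eq-3} is cleaner. Everything else is bookkeeping with the two scaling formulas.
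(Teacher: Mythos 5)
The overall strategy---pass to $\mathfrak{q}^c$ and $\au^{\underline e}$ via Corollary \ref{eq-3}, prove $e_0(\au^{\underline e};M)=e_0(\mathfrak{q}^c;M)$, then recover the claim from $e_0(\mathfrak{q}^c;M)=c^d\, e_0(\mathfrak{q};M)$ and $e_0(\au^{\underline e};M)=e_1\cdots e_d\cdot e_0(\au;M)=c^{d-1}e_0(\au;M)$---is exactly the paper's, and your bookkeeping at the end is correct. But the central step is not justified as you state it. You claim that because the rescaled elements $a_i^{e_i}$ all have initial degree $1$ with respect to the $\mathfrak{q}^c$-adic filtration, ``Proposition \ref{mult-1} applies directly'' and forces $\chi_A(K_\bullet(\au^{\underline e},\mathfrak{q}^c,M;n))=0$, i.e.\ $e_0(\au^{\underline e};M)=e_0(\mathfrak{q}^c;M)$. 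Proposition \ref{mult-1} only treats the case $\mathfrak{q}=\au A$; it says nothing about $K_\bullet(\au,\mathfrak{q},M;n)$ for an ideal $\mathfrak{q}$ properly containing $\au A$, even when every $c_i=1$. Indeed the implication ``all $c_i=1\ \Rightarrow\ \chi=0$'' is false: for $A=\Bbbk[[x,y]]$, $\mathfrak{q}=\mathfrak{m}$ and $\au=x,\;x+y^2$ one has $c_1=c_2=1$ but $e_0(\au;A)=2>1=e_0(\mathfrak{m};A)$, so $\chi_A(K_\bullet(\au,\mathfrak{m},A;n))=1$ for $n\gg 0$. What makes the vanishing work in the proof of \ref{mult-1} is not that the degrees equal $1$ but that the Koszul homology of $\underline{aT}$ on $R_M(\au)$ is finitely generated over $R_A(\au)/\underline{aT}R_A(\au)=A/\au A$, a ring concentrated in degree $0$---and that uses $\mathfrak{q}=\au A$.

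The missing input is precisely the system-of-parameters hypothesis, which you carry along but never actually use at this point: by \ref{eq-3}(ii) one has $\mathfrak{q}^{nc}M=\sum_{i=1}^{d} a_i^{e_i}\mathfrak{q}^{(n-1)c}M$ for $n\gg 0$, i.e.\ $\au^{\underline e}$ is a reduction of $\mathfrak{q}^c$ on $M$. From this the paper deduces $(\mathfrak{q}^c)^{n+k}M\subseteq(\au^{\underline e})^kM\subseteq(\mathfrak{q}^c)^kM$ for $k\gg 0$ and sandwiches the Hilbert--Samuel functions to get $e_0(\au^{\underline e};M)=e_0(\mathfrak{q}^c;M)$. (Alternatively, Lemma \ref{eq-2}(i) shows $R_M(\mathfrak{q}^c)$ is a finitely generated module over $\mathfrak{S}=A[a_1^{e_1}T^c,\ldots,a_d^{e_d}T^c]$, so the Koszul homology of $a_1^{e_1}T^c,\ldots,a_d^{e_d}T^c$ on $R_M(\mathfrak{q}^c)$ is finitely generated over $\mathfrak{S}/(a_1^{e_1}T^c,\ldots,a_d^{e_d}T^c)\mathfrak{S}$, which is concentrated in degree $0$; that is the legitimate way to extend the vanishing argument of \ref{mult-1} to this setting.) Your closing worry about whether $(a_i^{e_i})^\star$ has initial degree exactly $1$ is a side issue; the initial degree is not what carries the argument.
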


\begin{proof}
	By view of \ref{eq-3} we get $(\mathfrak{q}^c)^n M = \sum_{i=1}^{d} a_i^{e_i} (\mathfrak{q}^c)^{n-1} M$ and 
	$({\mathfrak{q}^c})^l M \subset \au^{\underline{e}}M \subset \mathfrak{q}^cM$ for some $l \in \mathbb{N}$. 
	Moreover it follows that $(\mathfrak{q}^c)^{n+k}M \subseteq (\au^{\underline{e}})^k M \subseteq (\mathfrak{q}^c)^k M$ for all $k \geq l$. Then 
	\[
	\ell_A(M/(\mathfrak{q}^{c})^{n+k}M) \geq \ell_A(M/(\au^{\underline{e}})^k M) \geq 
	\ell_A(M/(\mathfrak{q}^c)^k M),
	\]
	which implies that $e_0(\mathfrak{q}^c;M) = e_0(\au^{\underline{e}};M)$. Because of $e_0(\mathfrak{q}^c;M) = 
	c^d\cdot e_0(\mathfrak{q};M)$ as easily seen and $e_0(\au^{\underline{e}};M) = e_1 \cdot \ldots \cdot e_d\cdot e_0(\au;M)$ (see \ref{mult-3}). This finishes the proof. 
\end{proof}

The previous result is a generalization of \cite[Theorem 5.1]{BS} to the situation of finitely 
generated $A$-modules. In the case of a formally equidimensional ring the converse is also true 
(see \cite[Theorem 5.2]{BS}).

\section{The subregular case}
As a consequence of the definition of $\mathcal{L}_{\bullet}(\au,\mathfrak{q},M;n)$ we get the following 
equality 
\[
c_1\cdot \ldots \cdot c_d \cdot e_0(\mathfrak{q};M) = \sum_{i=1}^{d} (-1)^i \ell_A(L_i(\au,\mathfrak{q},M;n)) \text{ for all } n \gg 0.
\]
We have $L_0(\au,\mathfrak{q},M;n) \cong M/(\au,\mathfrak{q}^n)M$. The other homology modules are 
difficult to describe. For the vanishing of some of them in relation to the existence of $G_M(\mathfrak{q})$-regular 
sequences we refer to \cite{K}. 

In the previous section we have shown that $e_0(\au;M) = c \cdot e_0(\mathfrak{q};M)$ provided 
$a_1^{\star}, \ldots, a_d^{\star}$ forms a system of parameters of $G_M(\mathfrak{q})$. In the 
next, we consider the case that $(a_1^{\star}, \ldots, a_d^{\star})G_A(\mathfrak{q})$ contains 
a $G_M(\mathfrak{q})$-regular sequence of length $d-1$. 

We start with the behavior of $\chi(\au,\mathfrak{q},M;n)$ by passing to a certain element. 
To this end, let $\au' = a_2,\ldots,a_d, a = a_1$ and $c_1 =f$. 

\begin{lemma} \label{reg-1}
	Suppose that $a^{\star}$ is $G_M(\mathfrak{q})$ regular. Then  $\chi(\au,\mathfrak{q},M;n) = \chi(\au',\mathfrak{q},M/aM;n)$ for all $n \in \mathbb{Z}$. 
\end{lemma}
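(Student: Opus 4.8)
The plan is to compare the defining short exact sequences of complexes for the two objects $\chi(\au,\mathfrak{q},M;n)$ and $\chi(\au',\mathfrak{q},M/aM;n)$, using the hypothesis that $a^{\star}$ is $G_M(\mathfrak{q})$-regular to control the relevant multiplication maps on the graded pieces. The key observation is that $\chi(\au,\mathfrak{q},M;n)$ is additive along short exact sequences of complexes (by Lemma \ref{char}(a)), so I would like to build a short exact sequence relating $K_{\bullet}(\au,\mathfrak{q},M;n)$ (or rather its homology) to $K_{\bullet}(\au',\mathfrak{q},M/aM;n)$ with an ``error term'' that is acyclic, or at least has Euler characteristic zero, for $n \gg 0$.

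\emph{First step.} I would use Lemma \ref{cone-2} with $b = a = a_1$, $d$ replaced by $f = c_1$, to write $K_{\bullet}(\au,\mathfrak{q},M;n)$ as the mapping cone $C(m_a(K))$ of the multiplication map
\[
m_a(K) : K_{\bullet}(\au',\mathfrak{q},M;n-f) \to K_{\bullet}(\au',\mathfrak{q},M;n).
\]
This produces a short exact sequence of complexes
\[
0 \to K_{\bullet}(\au',\mathfrak{q},M;n) \to K_{\bullet}(\au,\mathfrak{q},M;n) \to K_{\bullet}(\au',\mathfrak{q},M;n-f)[-1] \to 0,
\]
whence, by \ref{char}(a),
\[
\chi(\au,\mathfrak{q},M;n) = \chi_A(K_{\bullet}(\au',\mathfrak{q},M;n)) - \chi_A(K_{\bullet}(\au',\mathfrak{q},M;n-f)).
\]
So I am reduced to understanding the difference of two consecutive (shifted by $f$) values of $\chi_A(K_{\bullet}(\au',\mathfrak{q},M;n))$.

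\emph{Second step.} Now I bring in the multiplication-by-$a$ short exact sequence on the $A$-module level. Since $a^{\star}$ is $G_M(\mathfrak{q})$-regular, the map $a : \mathfrak{q}^{m}M \to \mathfrak{q}^{m+f}M$ is injective with cokernel related to $\mathfrak{q}^{m+f}M/(a\mathfrak{q}^{m}M)$, and the regularity of $a^{\star}$ means precisely that $a\mathfrak{q}^{m}M = aM \cap \mathfrak{q}^{m+f}M$ for all $m$ (after possibly discarding finitely many small degrees; more precisely $(aM \cap \mathfrak{q}^{n}M) = a\,\mathfrak{q}^{n-f}M$ for $n \geq f$ when $a^{\star}$ is regular). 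Consequently there is a short exact sequence of complexes
\[
0 \to K_{\bullet}(\au',\mathfrak{q},M;n-f) \xrightarrow{\;a\;} K_{\bullet}(\au',\mathfrak{q},M;n) \to K_{\bullet}(\au',\mathfrak{q},M/aM;n) \to 0,
\]
where the cokernel complex is identified termwise using $\mathfrak{q}^{m}M/(aM\cap\mathfrak{q}^{m}M) \cong (\mathfrak{q}^{m}M + aM)/aM = \mathfrak{q}^{m}(M/aM)$ — here I must be a little careful that this last identification is exactly the definition of $K_{\bullet}(\au',\mathfrak{q},M/aM;n)$, which it is, modulo the finitely many small-degree discrepancies that do not affect homology in high internal degree. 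Applying \ref{char}(a) again gives
\[
\chi_A(K_{\bullet}(\au',\mathfrak{q},M;n)) - \chi_A(K_{\bullet}(\au',\mathfrak{q},M;n-f)) = \chi_A(K_{\bullet}(\au',\mathfrak{q},M/aM;n)) = \chi(\au',\mathfrak{q},M/aM;n).
\]
Combining with the first step yields the claimed equality $\chi(\au,\mathfrak{q},M;n) = \chi(\au',\mathfrak{q},M/aM;n)$.

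\emph{Main obstacle.} The delicate point is the middle short exact sequence of complexes: I need that $G_M(\mathfrak{q})$-regularity of $a^{\star}$ translates into $aM \cap \mathfrak{q}^{n}M = a\,\mathfrak{q}^{n-f}M$ and hence into an \emph{exact} sequence of the truncated/Rees-type complexes $K_{\bullet}(\au',\mathfrak{q},-;n)$ in every homological and internal degree (or at least for all internal degrees $n \gg 0$, which suffices since by Proposition \ref{mult-2} and \ref{kos-3} the Euler characteristics are eventually constant and the homologies are eventually stable). Verifying this cleanly — and checking that passing to $M/aM$ really does land on the complex $K_{\bullet}(\au',\mathfrak{q},M/aM;n)$ and not some Artin–Rees-shifted variant — is where the actual work lies; everything else is bookkeeping with Lemma \ref{char}. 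One should also note that $a^{\star}$ being regular forces $a$ to be $M$-regular, so $0 :_M a = 0$ and no extra correction term appears, which is what makes the two-term telescoping collapse to the stated identity.
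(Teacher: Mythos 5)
Your two building blocks are the right ones, and your second short exact sequence (multiplication by $a$ on $K_{\bullet}(\au',\mathfrak{q},M;-)$ with cokernel $K_{\bullet}(\au',\mathfrak{q},M/aM;n)$, valid because $a^{\star}$ regular gives $0:_Ma=0$ and $\mathfrak{q}^mM:_Ma=\mathfrak{q}^{m-f}M$ for \emph{all} $m$, so there are in fact no small-degree discrepancies to worry about) is precisely what the paper uses, phrased there as $0\to R_M(\mathfrak{q})(-f)\xrightarrow{aT^f}R_M(\mathfrak{q})\to R_{M/aM}(\mathfrak{q})\to 0$ tensored with $K_{\bullet}(\underline{a'T^{c'}};R_A(\mathfrak{q}))$. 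The genuine gap is in how you assemble the two steps: both of your displayed identities obtained from Lemma~\ref{char}(a) involve the quantities $\chi_A(K_{\bullet}(\au',\mathfrak{q},M;n))$, and these are not defined. The homology $H_i(\au',\mathfrak{q},M;n)$ is only annihilated by $\au'=a_2,\ldots,a_d$ (Theorem~\ref{kos-3}), and $\au'$ is not a system of parameters for $M$, so these modules can have one-dimensional support and infinite length. For instance, with $A=M=\Bbbk[[x,y]]$, $\mathfrak{q}=(x,y)$, $\au=x,y$, $a=x$, $\au'=y$, the module $H_0(y,\mathfrak{q},A;n)=\mathfrak{q}^n/y\mathfrak{q}^{n-1}$ contains $(x^nA+y\mathfrak{q}^{n-1})/y\mathfrak{q}^{n-1}\cong A/yA$, which has infinite length, even though $x^{\star}$ is $G_A(\mathfrak{q})$-regular. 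So Lemma~\ref{char}(a) applies to neither of your two sequences separately, and the telescoping is formally $\infty-\infty$.

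The repair is to combine the two sequences \emph{before} taking Euler characteristics. By your second step the morphism $m_a(K):K_{\bullet}(\au',\mathfrak{q},M;n-f)\to K_{\bullet}(\au',\mathfrak{q},M;n)$ is injective with cokernel $K_{\bullet}(\au',\mathfrak{q},M/aM;n)$; by Lemma~\ref{cone-2} its mapping cone is $K_{\bullet}(\au,\mathfrak{q},M;n)$. The cone of an injective morphism of complexes is quasi-isomorphic to its cokernel (the kernel of the natural surjection from the cone onto the cokernel is the cone of an identity morphism, hence acyclic), so $H_i(\au,\mathfrak{q},M;n)\cong H_i(\au',\mathfrak{q},M/aM;n)$ for all $i$ and all $n$, and the equality of Euler characteristics follows with no finiteness assumptions on the intermediate complexes. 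This is exactly the paper's argument, carried out there at the level of graded Rees modules, where the acyclic kernel appears as $K_{\bullet}(1,\underline{a'T^{c'}};R_M(\mathfrak{q}))(-f)$.
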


\begin{proof}
	If $a^{\star}$ is $G_M(\mathfrak{q})$-regular, then $0:_M a = 0$ and 
	$\mathfrak{q}^{n}M :_M a = \mathfrak{q}^{n-f}M$ for all $n \geq 0$. So it follows that 
	$0:_{R_M(\mathfrak{q})} aT^f = 0$ and $R_M(\mathfrak{q})/(aT^f)R_M(\mathfrak{q}) \cong R_{M/aM}(\mathfrak{q})$. 
	
	Moreover, there is the following diagram with exact rows
	\[
	\xymatrix{
		&  & 0 \ar[d]   & 0 \ar[d] &  &  &    \\ 
		& 0  \ar[r]  & R_M(\mathfrak{q})(-f) \ar@{=}[d]  \ar@{=}[r] & R_M(\mathfrak{q})(-f)  \ar[r] \ar[d]^{aT^f} & 0 &  &  \\ 
		& 0 \ar[r]  & R_M(\mathfrak{q})(-f) \ar[d]  \ar[r]^{aT^f} & R_M(\mathfrak{q}) \ar[d]  \ar[r] & R_{M/aM}(\mathfrak{q}) \ar[r] & 0 & \\ 
		&    & 0 & 0 &  &  & 
	}
	\]
	where the columns are the Koszul complexes $K_{\bullet}(1;R_M(\mathfrak{q}))(-f)$ and 
	$K_{\bullet}(aT^f;R_M(\mathfrak{q}))$ resp. That is, we have a short exact sequence of complexes 
	\[
	0 \to K_{\bullet}(1;R_M(\mathfrak{q}))(-f) \stackrel{aT^f}{\longrightarrow} 
	K_{\bullet}(aT^f;R_M(\mathfrak{q})) \to R_{M/aM}(\mathfrak{q}) \to 0.
	\]
	By tensoring with $K_{\bullet}(\underline{a'T^{c'}}, R_A(\mathfrak{q}))$ it provides a
	short exact sequence of complexes
	\[
	0 \to K_{\bullet}(1,\underline{a'T^{c'}},R_M(\mathfrak{q}))(-f) \stackrel{aT^f}{\longrightarrow} K_{\bullet}(\underline{a T^c};R_M(\mathfrak{q})) \to K_{\bullet}(\underline{a'T^{c'}};R_{M/aM}(\mathfrak{q}))\to 0.
	\]
	Since the first Koszul complex in the previous sequence is exact (see \ref{cone-1}), 
	the claim follows by the definition.
\end{proof}

The following result is a particular case of \cite[VIII, Lemma 3]{ZS} 
resp. to \cite[Lemma 1.6]{RV},  a generalization to filtered modules.

\begin{lemma} \label{reg-2}
	Let $\mathfrak{q} \subset A$ denote an ideal such that $\ell_A(M/\mathfrak{q}M) < \infty$.
	\begin{itemize}
		\item[(a)] Suppose that $a^{\star}$ is $G_M(\mathfrak{q})$ regular. Then
		$f \cdot e_0(\mathfrak{q};M) = e_0(\mathfrak{q};M/aM)$.
		\item[(b)] Let $\dim_A M = 1$ and $a \in \mathfrak{q}^f$ be a parameter of $M$. Then 
		$$
		f\cdot e_0(\mathfrak{q};M) = \ell_A(M/aM) - \ell_A(\mathfrak{q}^nM :_M a/\mathfrak{q}^{n-f}M)
		$$ 
		for all $n \gg 0$. In particular $\ell_A(\mathfrak{q}^nM :_M a/\mathfrak{q}^{n-f}M)$ is a constant 
		for all $n \gg 0$.
	\end{itemize}
\end{lemma}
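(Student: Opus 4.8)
The plan is to treat the two parts in the obvious way: part (a) is the leading-term consequence of the exact sequence of form modules attached to a $G_M(\mathfrak{q})$-regular element, and part (b) is the one-dimensional refinement where one must keep track of the kernel of multiplication by $a$.

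For part (a), first I would record the short exact sequence of graded $G_A(\mathfrak{q})$-modules
\[
0 \to G_M(\mathfrak{q})(-f) \stackrel{a^{\star}}{\longrightarrow} G_M(\mathfrak{q}) \to G_{M/aM}(\mathfrak{q}) \to 0,
\]
which is exactly the statement that $a^{\star}$ is a nonzerodivisor on $G_M(\mathfrak{q})$ together with the identification of the cokernel; this is essentially the computation $\mathfrak{q}^nM:_M a = \mathfrak{q}^{n-f}M$ already used in Lemma \ref{reg-1}. Taking Hilbert functions in degree $n$ gives $\ell_A(\mathfrak{q}^n(M/aM)/\mathfrak{q}^{n+1}(M/aM)) = \ell_A(\mathfrak{q}^nM/\mathfrak{q}^{n+1}M) - \ell_A(\mathfrak{q}^{n-f}M/\mathfrak{q}^{n-f+1}M)$ for all $n$, hence summing, $\ell_A(M/\mathfrak{q}^{n+1}(M/aM)) = \ell_A(M/\mathfrak{q}^{n+1}M) - \ell_A(M/\mathfrak{q}^{n-f+1}M)$ for $n \gg 0$. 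Now I would plug in the Hilbert--Samuel polynomials: the left side has degree $d-1 = \dim_A M/aM$ (since $a^{\star}$ regular forces $a$ to be a parameter), and on the right the difference $P(n) - P(n-f)$ of a degree-$d$ polynomial with leading term $e_0(\mathfrak{q};M)\binom{n+d}{d}$ has leading term $f\cdot e_0(\mathfrak{q};M)\binom{n+d-1}{d-1}$. Comparing leading coefficients gives $e_0(\mathfrak{q};M/aM) = f\cdot e_0(\mathfrak{q};M)$.

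For part (b), with $\dim_A M = 1$ and $a$ a parameter in $\mathfrak{q}^f$, I would use instead the exact sequence
\[
0 \to (\mathfrak{q}^nM :_M a)/\mathfrak{q}^{n-f}M \to M/\mathfrak{q}^{n-f}M \stackrel{a}{\longrightarrow} M/\mathfrak{q}^nM \to M/(aM+\mathfrak{q}^nM) \to 0,
\]
valid for $n \ge f$ (the map $a$ is well-defined because $a\mathfrak{q}^{n-f}M \subseteq \mathfrak{q}^nM$). Taking alternating lengths, $\ell_A((\mathfrak{q}^nM:_M a)/\mathfrak{q}^{n-f}M) - \ell_A(M/\mathfrak{q}^{n-f}M) + \ell_A(M/\mathfrak{q}^nM) - \ell_A(M/(aM+\mathfrak{q}^nM)) = 0$. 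For $n \gg 0$ the term $\ell_A(M/(aM+\mathfrak{q}^nM))$ stabilizes to $\ell_A(M/aM)$, since $\mathfrak{q}^nM \subseteq aM$ for large $n$ (as $a$ is a parameter of the one-dimensional module $M$, so $M/aM$ has finite length and $\mathfrak{q}$ is nilpotent on it). And $\ell_A(M/\mathfrak{q}^nM) - \ell_A(M/\mathfrak{q}^{n-f}M) = P(n-1) - P(n-f-1)$ is, for $n \gg 0$, a polynomial in $n$ of degree $0$ equal to $f\cdot e_0(\mathfrak{q};M)$ (here $P$ has degree $d=1$, so the difference of consecutive-by-$f$ values is a constant, the leading coefficient scaled by $f$). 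Substituting yields $\ell_A((\mathfrak{q}^nM:_M a)/\mathfrak{q}^{n-f}M) = \ell_A(M/aM) - f\cdot e_0(\mathfrak{q};M)$ for $n \gg 0$, which rearranges to both assertions in (b).

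The main obstacle is the bookkeeping around which Hilbert--Samuel polynomial governs which length and over which range of $n$ the identities become polynomial: one must be careful that in (a) the sequence of form modules is exact in \emph{every} degree (so the Hilbert-function identity holds for all $n$, shifts included, reading $\mathfrak{q}^m M = M$ for $m \le 0$), whereas in (b) the stabilization $\ell_A(M/(aM+\mathfrak{q}^nM)) = \ell_A(M/aM)$ and the polynomiality of $\ell_A(M/\mathfrak{q}^nM)$ only hold for $n \gg 0$, and the final "in particular" clause is then immediate since the right-hand side is manifestly independent of $n$. Everything else is a routine manipulation of binomial coefficients and leading terms.
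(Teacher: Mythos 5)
Your proposal is correct and follows essentially the same route as the paper: both parts rest on the four-term exact sequence $0 \to (\mathfrak{q}^nM:_Ma)/\mathfrak{q}^{n-f}M \to M/\mathfrak{q}^{n-f}M \stackrel{a}{\to} M/\mathfrak{q}^nM \to M/(a,\mathfrak{q}^n)M \to 0$, with the colon term killed by $\mathfrak{q}^nM:_Ma=\mathfrak{q}^{n-f}M$ in case (a) and the cokernel stabilizing to $M/aM$ in case (b). Your phrasing of part (a) via the graded sequence $0 \to G_M(\mathfrak{q})(-f)\to G_M(\mathfrak{q})\to G_{M/aM}(\mathfrak{q})\to 0$ is just the degreewise repackaging of the same computation, and your bookkeeping of degrees and leading coefficients is accurate.
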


\begin{proof}
	The statements follow by counting the length in the exact sequence
	\[
	0 \to \mathfrak{q}^n M :_M a/\mathfrak{q}^{n-f} M \to M/\mathfrak{q}^{n-f} M \stackrel{a}{\to} 
	M/\mathfrak{q}^n M \to M/(a,\mathfrak{q}^n) M \to 0
	\] 
	for all $n \in \mathbb{N}$. If $a^{\star}$ is $G_M(\mathfrak{q})$ regular, then 
	$\mathfrak{q}^n M :_M a = \mathfrak{q}^{n-f} M$ for all $n \in \mathbb{N}$. If 
	$\dim_A M = 1$ and $a \in \mathfrak{q}^f$ is a parameter of $M$, then 
	$\ell_A(M/\mathfrak{q}^nM) - \ell_A(M/\mathfrak{q}^{n-f}M) = f \cdot e_0(\mathfrak{q};M)$
	for all $ n \gg 0$. Moreover $\mathfrak{q}^n M \subset aM$ for all $n \gg 0$. For details see \cite[VIII, Lemma 3]{ZS} resp. \cite[Lemma 1.6]{RV}.
%
\end{proof}

Now we are prepared for the main situation of this section. 

\begin{corollary} \label{reg-3}
	With the previous notation assume that $a_1^{\star},\ldots,a_{d-1}^{\star}$ is a $G_M(\mathfrak{q})$-regular 
	sequence. Then 
	\[
	c_1\cdot \ldots \cdot c_d \cdot e_0(\mathfrak{q};M) = \ell_A(M/\au M) - \ell_A((\au',\mathfrak{q}^n)M:_M a_d/ 
	(\au',\mathfrak{q}^{n-c_d})M)
	\]
	for all $n \gg 0$ where $\au' = a_1,\ldots,a_{d-1}$.
\end{corollary}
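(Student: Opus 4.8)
The plan is to reduce the asserted identity to the one-dimensional module $\bar M := M/\au' M$ (recall $\au' = a_1,\ldots,a_{d-1}$ in the statement of the Corollary) and then to read it off from Lemma \ref{reg-2}. First I would record the basic facts about $\bar M$: since $a_1,\ldots,a_{d-1}$ is part of the system of parameters $\au$ of $M$, one has $\dim_A \bar M = d-(d-1) = 1$; the class of $a_d \in \mathfrak q^{c_d}$ is a parameter of $\bar M$ (because $\ell_A(M/\au M) = \ell_A(\bar M/a_d\bar M) < \infty$); and $\ell_A(\bar M/\mathfrak q\bar M) < \infty$ since $\mathfrak q + \Ann_A\bar M$ is $\mathfrak m$-primary. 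So Lemma \ref{reg-2} is applicable to $\bar M$ with $a_d$ in the role of $a$ and $c_d$ in the role of $f$.

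Next I would translate the two length terms on the right-hand side into terms over $\bar M$ via the projection $\pi\colon M \twoheadrightarrow \bar M$. Plainly $M/\au M \cong \bar M/a_d\bar M$. For the second term, observe that $\pi^{-1}(\mathfrak q^n\bar M) = (\au',\mathfrak q^n)M$, $\pi^{-1}(\mathfrak q^{n-c_d}\bar M) = (\au',\mathfrak q^{n-c_d})M$ and $\pi^{-1}\bigl(\mathfrak q^n\bar M :_{\bar M} a_d\bigr) = (\au',\mathfrak q^n)M :_M a_d$, and that all three of these submodules of $M$ contain $\ker\pi = \au' M$; moreover $(\au',\mathfrak q^{n-c_d})M \subseteq (\au',\mathfrak q^n)M :_M a_d$ because $a_d \in \mathfrak q^{c_d}$. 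Hence the correspondence theorem gives
\[
(\au',\mathfrak q^n)M :_M a_d \,/\, (\au',\mathfrak q^{n-c_d})M \;\cong\; \bigl(\mathfrak q^n\bar M :_{\bar M} a_d\bigr)/\mathfrak q^{n-c_d}\bar M ,
\]
so the right-hand side of the claimed equality equals $\ell_A(\bar M/a_d\bar M) - \ell_A\!\bigl((\mathfrak q^n\bar M :_{\bar M} a_d)/\mathfrak q^{n-c_d}\bar M\bigr)$.

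To conclude I would apply Lemma \ref{reg-2} twice. By part (b), applied to the one-dimensional module $\bar M$ and the parameter $a_d \in \mathfrak q^{c_d}$, the last displayed difference equals $c_d \cdot e_0(\mathfrak q;\bar M)$ for all $n \gg 0$. By part (a), applied successively to $a_1^\star,\ldots,a_{d-1}^\star$, one gets $e_0(\mathfrak q;\bar M) = c_1 \cdots c_{d-1}\cdot e_0(\mathfrak q;M)$: at the $i$-th step the hypothesis that $a_1^\star,\ldots,a_{d-1}^\star$ is a $G_M(\mathfrak q)$-regular sequence, together with the standard identification $G_{M/(a_1,\ldots,a_{i-1})M}(\mathfrak q) \cong G_M(\mathfrak q)/(a_1^\star,\ldots,a_{i-1}^\star)G_M(\mathfrak q)$ (Valabrega--Valla), guarantees that the image of $a_i^\star$ is the initial form of $a_i$ in the current form module and is again a non-zerodivisor on it, so Lemma \ref{reg-2}(a) applies with initial degree $c_i$. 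Multiplying the two outcomes yields $c_1\cdots c_d\cdot e_0(\mathfrak q;M)$, which is the left-hand side.

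The step I expect to require the most care is precisely this inductive bookkeeping: one must be certain that cutting down by $a_1,\ldots,a_{d-1}$ one at a time preserves both the initial degrees $c_i$ and the regularity of the successive initial forms, so that Lemma \ref{reg-2}(a) genuinely applies at each stage. This is the Valabrega--Valla behaviour of a regular sequence of initial forms, and it is in essence already contained in the identities $0 :_M a_1 = 0$ and $R_M(\mathfrak q)/(a_1T^{c_1})R_M(\mathfrak q) \cong R_{M/a_1M}(\mathfrak q)$ used in the proof of Lemma \ref{reg-1}. An alternative route that avoids invoking the form module of $\bar M$ is to iterate Lemma \ref{reg-1} to reduce $\chi_A(K_{\bullet}(\au,\mathfrak q,M;n))$ to $\chi_A(K_{\bullet}(a_d,\mathfrak q,\bar M;n))$ and then combine with Proposition \ref{mult-2}; but the same inductive point about initial forms resurfaces there.
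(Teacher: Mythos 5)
Your proposal is correct and follows essentially the same route as the paper: an iterated application of Lemma \ref{reg-2}(a) to obtain $c_1\cdots c_{d-1}\cdot e_0(\mathfrak q;M)=e_0(\mathfrak q;M/\au'M)$, followed by Lemma \ref{reg-2}(b) applied to the one-dimensional module $M/\au'M$ with parameter $a_d$. The extra details you supply --- the correspondence-theorem identification of the colon-quotient over $M$ with the one over $\bar M$, and the Valabrega--Valla identification $G_{M/a_1M}(\mathfrak q)\cong G_M(\mathfrak q)/a_1^{\star}G_M(\mathfrak q)$ that legitimizes the iteration --- are exactly the points the paper leaves implicit.
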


\begin{proof}
	By an iterative application of \ref{reg-2} (a) it follows $c_1\cdot \ldots\cdot c_{d-1} \cdot e_0(\mathfrak{q};M) 
	= e_0(\mathfrak{q};M/\au' M)$. By view of \ref{reg-2} (b) it implies $c_d \cdot e_0(\mathfrak{q};M/\au' M) 
	= \ell_A(M/\au M) - \ell_A((\au',\mathfrak{q}^n)M:_M a_d/ 
	(\au',\mathfrak{q}^{n-c_d})M)$. Putting both together yields the claim.
\end{proof}

It is a problem to give an interpretation of the constant $\ell_A((\au',\mathfrak{q}^n)M:_M a_d/ 
(\au',\mathfrak{q}^{n-c_d})M)$ in intrinsic data of the module $M$. As a partial result in this direction we get the 
following.

\begin{lemma} \label{reg-4}
	With the notation and assumption of \ref{reg-3} we have the following inequality
	\[
	\ell_A(M/\au M) \geq c \cdot e_0(\mathfrak{q};M) + \mathfrak{x},
	\]
	where $\mathfrak{x}=  \ell_A([\Ext^{d-1}_{G_A(\mathfrak{q})}(G_A(\mathfrak{q})/(\underline{a}^{\star}),G_M(\mathfrak{q}))]_{n-s-1})$
	is a constant for $n \gg 0$ and  $s$ denotes $s = c_1+\ldots +c_d$.
\end{lemma}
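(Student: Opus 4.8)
The plan is to start from Corollary~\ref{reg-3}, which already reduces the task to showing that the constant
$\ell_A((\au',\mathfrak{q}^n)M:_M a_d/(\au',\mathfrak{q}^{n-c_d})M)$ is bounded below by the quantity $\mathfrak{x}$ for $n \gg 0$. Since $a_1^\star,\ldots,a_{d-1}^\star$ is a $G_M(\mathfrak{q})$-regular sequence, the natural move is to pass to the form ring: the colon module $(\au',\mathfrak{q}^n)M:_M a_d$ should be compared with a colon in $G_M(\mathfrak{q})$ modulo $(\underline{a'}^\star)$, because killing a regular sequence commutes with taking graded pieces. First I would identify $G_{M/\au'M}(\mathfrak{q})$ (up to the usual subtleties of the form ring of a quotient) with $G_M(\mathfrak{q})/(\underline{a'}^\star)G_M(\mathfrak{q})$, using regularity of the sequence $a_1^\star,\ldots,a_{d-1}^\star$ exactly as in Lemma~\ref{reg-1} and Lemma~\ref{reg-2}(a); this lets me replace $M$ by $\bar M = M/\au'M$, which has $\dim_A \bar M = 1$, and work with the single parameter $a_d \in \mathfrak{q}^{c_d}$.

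Next, in the one-dimensional situation I would interpret the constant $\ell_A(\mathfrak{q}^n\bar M:_{\bar M} a_d/\mathfrak{q}^{n-c_d}\bar M)$ homologically. The submodule $0:_{G_{\bar M}(\mathfrak{q})} a_d^\star$ measures the failure of $a_d^\star$ to be regular on $G_{\bar M}(\mathfrak q)$, and its graded pieces are essentially $(\mathfrak q^n\bar M:_{\bar M}a_d + \mathfrak q^{n-c_d+1}\bar M)/\mathfrak q^{n-c_d+1}\bar M$; summing a telescoping family of these for large $n$ recovers (a lower bound for) $\ell_A(\mathfrak{q}^n\bar M:_{\bar M}a_d/\mathfrak{q}^{n-c_d}\bar M)$. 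The point is then that $0:_{G_{\bar M}(\mathfrak q)}a_d^\star \cong \Hom_{G_A(\mathfrak q)}(G_A(\mathfrak q)/a_d^\star, G_{\bar M}(\mathfrak q))$, and a Koszul/change-of-rings computation converts $\Hom$ over the quotient ring $G_A(\mathfrak q)/(\underline{a'}^\star)$ into $\Ext^{d-1}_{G_A(\mathfrak q)}(G_A(\mathfrak q)/(\underline{a}^\star), G_M(\mathfrak q))$, because $a_1^\star,\ldots,a_{d-1}^\star$ form a $G_M(\mathfrak q)$-regular sequence and so the Koszul complex on $(\underline{a'}^\star)$ is a free resolution giving the standard $\Ext$-shift. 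The degree shift by $s+1 = c_1+\cdots+c_d+1$ arises from the internal degrees $\deg a_i^\star = c_i$ in the Koszul complex plus the extra $+1$ from passing between $\mathfrak q^{n-c_d}$ and $\mathfrak q^{n-c_d+1}$.

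Finally I would assemble: Corollary~\ref{reg-3} gives $\ell_A(M/\au M) = c\cdot e_0(\mathfrak q;M) + \ell_A((\au',\mathfrak q^n)M:_M a_d/(\au',\mathfrak q^{n-c_d})M)$, and the displayed comparison shows the last term is $\geq \ell_A([\Ext^{d-1}_{G_A(\mathfrak q)}(G_A(\mathfrak q)/(\underline a^\star),G_M(\mathfrak q))]_{n-s-1}) = \mathfrak{x}$, which is a constant for $n\gg 0$ since the $\Ext$ module is a finitely generated graded module over the Artinian-in-high-degrees quotient $G_A(\mathfrak q)/(\underline a^\star)$ (here $a_1^\star,\ldots,a_d^\star$ is a system of parameters of a module of dimension one less, so the $\Ext$ lives in bounded degrees eventually — more precisely it is annihilated by a power of the irrelevant ideal). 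The main obstacle I anticipate is the bookkeeping in the colon-module comparison: the form ring of the quotient $M/\au'M$ need not equal $G_M(\mathfrak q)/(\underline{a'}^\star)$ on the nose, only in large degrees (Artin–Rees), so I must be careful that all identifications are asserted only for $n\gg 0$, and I must track the internal degree shifts precisely so that the index $n-s-1$ comes out correctly. The inequality (rather than equality) in the statement is exactly the slack left by this "large $n$" comparison, so I would not try to sharpen it.
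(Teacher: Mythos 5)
Your proposal follows essentially the same route as the paper: reduce via Corollary~\ref{reg-3} to bounding the constant $\ell_A((\au',\mathfrak{q}^n)M:_M a_d/(\au',\mathfrak{q}^{n-c_d})M)$ from below, realize a graded piece of the colon module $(\au'^{\star})G_M(\mathfrak{q}):a_d^{\star}/(\au'^{\star})G_M(\mathfrak{q})$ inside it, rewrite that piece as a $\Hom$ from $G_A(\mathfrak{q})/(\au^{\star})$ into $G_M(\mathfrak{q})/(\au'^{\star})G_M(\mathfrak{q})$, and use the regularity of $a_1^{\star},\ldots,a_{d-1}^{\star}$ to convert it to $\Ext^{d-1}_{G_A(\mathfrak{q})}$ with the degree shift to $n-s-1$. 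The only cosmetic differences are that the paper injects a single graded piece (in degree $n-c_d-1$) rather than telescoping over several degrees, and that the identification $G_{M/\au'M}(\mathfrak{q})\cong G_M(\mathfrak{q})/(\au'^{\star})G_M(\mathfrak{q})$ you worry about holds exactly, not merely for large degrees, once the initial forms form a regular sequence.
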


\begin{proof}
	There is an injection of 
	\[
	\mathfrak{X} := [(\au'^{\star})G_M(\mathfrak{q}) : a_d^{\star}/(\au'^{\star})G_M(\mathfrak{q})]_{n-c_d-1} \cong 
	((\au',\mathfrak{q}^n)M:_M a_d) \cap (\au',\mathfrak{q}^{n-c_d-1})M)/ (\au',\mathfrak{q}^{n-c_d})M
	\]
	into $(\au',\mathfrak{q}^n)M:_M a_d/ (\au',\mathfrak{q}^{n-c_d})M$. Moreover, there is an isomorphism 
	$$ 
	\mathfrak{X} \cong [\Hom_{G_A(\mathfrak{q})}(G_A(\mathfrak{q})/(\underline{a}^{\star}), G_M(\mathfrak{q})/
	(\au'^{\star})G_M(\mathfrak{q}))]_{n-c_d-1}.
	$$
	Since $\au'^{\star}$ is a $G_M(\mathfrak{q})$-regular sequence of length $d-1$, it follows that 
	\[
	\mathfrak{X} \cong [\Ext^{d-1}_{G_A(\mathfrak{q})}(G_A(\mathfrak{q})/(\underline{a}^{\star}),G_M(\mathfrak{q}))]_{n-s-1},
	\]
	which proves the claim.
\end{proof}

We conclude with a geometric application on the local Bezout numbers. 

\begin{example} \label{reg-5}
	Let $C = V(F), D = V(G) \subset \mathbb{P}^2_{\Bbbk}$ be two curves in the projective plane without a common 
	component. Let $\mu(P;C,D)$ denote the local intersection multiplicity of $P$ in $C \cap D$. In the case 
	of $P$ is the origin, it follows that $\mu(P;C,D) = \ell_A(A/(f,g))$, where $A = \Bbbk[x,y]_{(x,y)}$ and $f,g$ denote 
	the equations in $A$. Since $C,D$ have no component in common, $\{f,g\}$ forms as system of parameters in $A$. 
	Let $\mathfrak{m}$ denote the maximal ideal of $A$. Then $B := \Bbbk[X,Y] = G_A(\mathfrak{m})$ and $1 = e_0(\mathfrak{m};A)$. 
	We distinguish two cases:
	\begin{itemize}
		\item[1.] $C$ and $D$ intersect transversally in the origin. Then $f^{\star},g^{\star}$ form a 
		homogeneous system of parameters in $B$ and therefore
		\[
		\ell_A(A/(f,g)) = c \cdot d,
		\]
		where $c,d$ denote the initial degree of $f,g$ resp.
		\item[2.] Suppose that $C$ and $D$ do not intersect transversally. Then 
		\[
		\ell_A(A/(f,g)) \geq c\cdot d +t,
		\]
		where $t$ denotes the number of common tangents of $f$ and $g$ at the origin counted with multiplicities. 
	\end{itemize}
	\begin{proof}
		First note that $A$ is a Cohen-Macaulay ring and therefore $e_0(f,g;A) = \ell_A(A/(f,g))$. Then the 
		equality in the first case is a consequence of \ref{eq-4}. To this end note that $f^{\star},g^{\star}$ 
		forms a system of parameters in $B$ provided $C$ and $D$ intersect transversally in the origin. 
		
		For the second case we use \ref{reg-4}. To this end we have to show that $\mathfrak{x} =t$. We 
		put $\mathfrak{Y} = f^{\star}B:_B g^{\star}/f^{\star}B$. Since $f^{\star},g^{\star}$ are not relatively 
		prime, we write $f^{\star} = h\cdot f', g^{\star} = h \cdot g'$ with homogeneous polynomials $f',g',h \in B$, 
		where $f',g'$ are relatively prime. Then 
		\[
		\mathfrak{Y} = f'hB:_B g'h/f'hB \cong f'B/f'hB \cong B/hB[-\deg f']
		\]
		and $\dim_{\Bbbk} \mathfrak{Y}_n = \deg h$ for all $n \gg 0$. Since $\deg h$ is the number of common tangents 
		counted with multiplicities, this confirms the second case.
	\end{proof}	
\end{example}

The second case was also proved by Byd\u{z}ovsk\'y (see \cite{By}). Note that this is an 
improvement of the corresponding result in \cite{Fi} where it is shown that $\ell_A(A/(f,g)) \geq c\cdot d +1$ 
in case there is a common tangent. 

A further discussion of the difference $\ell_A(A/(f,g)) - c\cdot d - t \geq 0$ is given in \cite{BS}. 
There is also another approach by blowing-ups. 

{\bf Acknowledgement:} The authors are grateful to the reviewer for bibliographical comments 
and suggestions.

\end{document}